\newtheorem{theorem}{Theorem}
\newtheorem{lemma}{Lemma}
\newtheorem{proposition}{Proposition}
\newtheorem{corollary}{Corollary}
\newtheorem{statement}{Claim}
\def\leq{\leqslant}
\def\geq{\geqslant}
\date{}
\title{Polynomial method for perfect 2-colourings of circulant graphs}
\author{Svyatoslav Novikov}
\begin{document}
\maketitle

\begin{abstract}
    In this paper we prove that if an infinite circulant graph with $k$
distances has a perfect $2$-colouring with parameters $(b, c)$, then $b + c
\leq 2k + \frac{b+c}{q^t}$ for all positive integers $t$ and primes
$q$ satisfying $\frac{b+c}{gcd(b,c)}\vdots q^t$. In addition, we show that if $b + c = q^t$,
then this necessary condition becomes sufficient for the existence of
perfect $2$-colourings in circulant graphs. 
\end{abstract}

\section{Introduction}
A \emph{perfect $2$-colouring} of a regular graph $G$ with parameters $(b,\,c)$ is a colouring of its vertices in $2$ colours (black and white), in which each black vertice has exactly $b$ white neighbours and each white vertice has $c$ black neighbours.

Perfect colourings are often referred to as equitable partitions; this term was introduced by Delsarte in the book \cite{delsarte}.

An \emph{infinite circulant graph with $k$ distances} $l_1,...,l_k$ is a graph (possibly, with loops and multiple edges), whose vertices are integer numbers; numbers, which differ by $l_i$ for some $i$, are connected with an edge. Denote such a graph by $C_{\infty}(l_1,...,l_k)$. Remark that $C_{\infty}(l_1,...,l_k)$ is a regular graph of degree $2k$.

Perfect $2$-colourings of circulant graphs and their parameters are being subject of active research (see, e.g.,  \cite{khoroshilova09}, \cite{khoroshilova11}, \cite{parshina},\cite{parshina17}, \cite{parshina20}). However, the above works  consider only the cases when the distances $l_1,...,l_k$ have some special form. 

On the contrary, in this work we prove some inequalities between permissible values of $b,c,k$, which apply to arbitrary values of $l_1,...,l_k$. In particular, we prove the hypothesis (stated in \cite{khoroshilova09}) that the parameters $(5,3)$ are not permissible for $3$ distances.

For this purpose we introduce the notion of \emph{multitiling} of an abelian group, which is a natural generalization of the notion of tiling. Next, for the group $\mathbb{Z}/P\mathbb{Z}$ we describe multitilings in terms of polynomials with integer coefficients, which satisfy some divisibility condition, and, in terms of cyclotomic polynomials, we obtain a necessary and sufficient condition for the existence of a multitiling of fixed multiplicity with some fixed "tile". 
One can show that the question of permissibility of the parameters $b,c,k$ for perfect $2$-colourings of graphs $C_{\infty}(l_1,...,l_k)$ can be reduced to the same question for graphs $G_P=C_P(l_1,...,l_k)$ on the residues modulo $P$ for $P$. Finally, perfect $2$-colourings of graphs $C_P(l_1,...,l_k)$ are represented as particular instances of multitilings of multiplicity $c$ of the group $\mathbb{Z}/P\mathbb{Z}$ with some tile $u_{l_1,...,l_k;b,c;P}$, which allows to deduce an inequality on $b,c,k$, which is the main result of the work. Moreover, we show that if $b+c$ is a prime power then the same condition is also a sufficient condition for the permissibility of the parameters $b,c$ for $k$ distances.

Remark that in some other works (see, e.g., \cite{coven}, \cite{fugledepnq}, \cite{pompeiu}, \cite{laba}, \cite{music}, \cite{spectral}) similar reformulations of tilings in terms of polynomials are introduced, and, moreover, similarly to this work, cyclotomic polynomials are applied. However, in such works the condition $(T1)$, first introduced in \cite{coven}, is considered and used only for tilings of multiplicity $1$. In this work we generalize $(T1)$ (point $1$ of lemma \ref{multdivlm}) to multitilings and apply it to perfect $2$-colourings.

\section{Preliminaries and main results}
Polynomials $\Phi_n(x)=\prod\limits_{1 \leq k \leq n;\; \gcd(k,n)=1} (x-e^{2 i \pi \frac{k}{n}})$, $n \geq 1,\; n \in \mathbb{Z}$, are called cyclotomic polynomials. Below some of their properties are given:

\begin{proposition}
1) $\Phi_n(x)$ are irreducible in $\mathbb{Q}[x]$ and have integer coefficients.

2) $\Phi_n(1)=1$, if $n>1$, and $n$ is not a prime power.

3) $\Phi_{p^k}(1)=p$, if $k \geq 1$ and $p$ is a prime.

4) $x^n-1 = \prod\limits_{d \mid n} \Phi_d(x)$.

5) $\Phi_{p^k}(x)=\frac{x^{p^k}-1}{x^{p^{k-1}}-1}=\sum\limits_{l=0}^{p-1}x^{p^{k-1}l}$
\end{proposition}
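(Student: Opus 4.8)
The plan is to prove the five items in the order (4), (1), (5), (3), (2), since the factorization in (4) is the backbone for everything else. For (4), I would compare two monic polynomials with the same roots: over $\mathbb{C}$ the polynomial $x^n-1$ has exactly $n$ distinct roots (it is coprime to its derivative $nx^{n-1}$), namely all $n$-th roots of unity; each such root $\zeta$ has a well-defined multiplicative order $d$, which divides $n$, and $\zeta$ is then a primitive $d$-th root of unity, i.e.\ a root of $\Phi_d$; conversely every root of $\Phi_d$ with $d \mid n$ is an $n$-th root of unity, and the root sets of $\Phi_d$ and $\Phi_{d'}$ are disjoint for $d \neq d'$. Since $\sum_{d \mid n}\varphi(d) = n$, the monic polynomials $x^n-1$ and $\prod_{d\mid n}\Phi_d(x)$ have identical multisets of roots, hence coincide.

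Given (4), the integer-coefficient half of (1) follows by induction on $n$: $\Phi_1(x)=x-1\in\mathbb{Z}[x]$, and for $n>1$ write $x^n-1=\Phi_n(x)\,g(x)$ with $g(x)=\prod_{d\mid n,\,d<n}\Phi_d(x)$ monic and integral by the inductive hypothesis; division of $x^n-1$ by the monic polynomial $g$ stays in $\mathbb{Z}[x]$, so $\Phi_n=(x^n-1)/g\in\mathbb{Z}[x]$. The genuinely hard part is irreducibility over $\mathbb{Q}$, and I expect this to be the main obstacle. For it I would fix a primitive $n$-th root of unity $\zeta$, let $f\in\mathbb{Z}[x]$ be its monic minimal polynomial, write $\Phi_n=fg$ in $\mathbb{Z}[x]$, and show that for every prime $p\nmid n$ the number $\zeta^p$ is again a root of $f$: otherwise $\zeta^p$ is a root of $g$, so $\zeta$ is a root of $g(x^p)$, hence $f(x)\mid g(x^p)$ in $\mathbb{Z}[x]$; reducing mod $p$ and using $g(x^p)\equiv g(x)^p\pmod p$ forces $\bar f$ and $\bar g$ to share a factor in $\mathbb{F}_p[x]$, so $\bar\Phi_n=\bar f\bar g$ has a repeated root, contradicting the separability of $x^n-1$ over $\mathbb{F}_p$ (which holds exactly because $p\nmid n$). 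Iterating $\zeta\mapsto\zeta^p$ over primes $p\nmid n$ reaches every primitive $n$-th root of unity, so $\deg f\geq\varphi(n)=\deg\Phi_n$ and $f=\Phi_n$.

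For (5) I would apply (4) to $n=p^k$ and to $n=p^{k-1}$ and divide, getting $\Phi_{p^k}(x)=(x^{p^k}-1)/(x^{p^{k-1}}-1)$; the substitution $y=x^{p^{k-1}}$ then turns this into $(y^p-1)/(y-1)=1+y+\dots+y^{p-1}=\sum_{l=0}^{p-1}x^{p^{k-1}l}$. Evaluating the last expression at $x=1$ gives (3): $\Phi_{p^k}(1)=p$. Finally, for (2) I would use (4) in the form $1+x+\dots+x^{n-1}=\prod_{d\mid n,\,d>1}\Phi_d(x)$, evaluate at $x=1$ to get $\prod_{d\mid n,\,d>1}\Phi_d(1)=n$, and induct on $n$: the prime-power divisors $d\mid n$ already contribute $n$ to this product (their values are $p$ by (3), using the inductive hypothesis for the top power once $n$ is not itself a prime power), so the remaining factors — the values $\Phi_d(1)$ over composite non-prime-power $d\mid n$ — multiply to $1$; as these are positive integers and all but possibly $\Phi_n(1)$ equal $1$ by induction, $\Phi_n(1)=1$. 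Everything besides the irreducibility argument is routine bookkeeping with (4).
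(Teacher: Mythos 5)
Your proof is correct. The paper states this proposition as standard background on cyclotomic polynomials and gives no proof at all, so there is no argument to compare against; your write-up supplies exactly the classical chain of reasoning: the root-counting identity for (4), induction plus monic division for integrality, the Dedekind-style argument (if $\zeta^p$ were a root of the complementary factor $g$, then $f\mid g(x^p)$, and reducing mod $p$ with $g(x^p)\equiv g(x)^p$ forces a repeated factor of $x^n-1$ over $\mathbb{F}_p$, impossible for $p\nmid n$) for irreducibility, and evaluation at $x=1$ of the factorization $1+x+\dots+x^{n-1}=\prod_{d\mid n,\,d>1}\Phi_d(x)$ for (2) and (3). All steps are sound, including the two points most worth checking: the passage from divisibility in $\mathbb{Q}[x]$ to $\mathbb{Z}[x]$ (Gauss's lemma, available since $f$ and $g$ are monic integral) and the closing of the induction in (2), where the prime-power divisors already account for the full product $n$ so the remaining factors, all equal to $1$ by induction except possibly $\Phi_n(1)$, force $\Phi_n(1)=1$.
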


For an abelian group $H$ we will call a \emph{tile} on $H$ an arbitrary function $u:H \to \mathbb{Z}$.

We call an \emph{$m$-multitiling} of a group $H$ with a tile $u$ a function $v:H \to \mathbb{Z}$ such that 
\begin{equation}\label{convol}
\sum\limits_{h \in H} u(g-h)v(h)=m\;
\end{equation}
for each $g\in H$. 
Assume that $m \in \mathbb{Z},\,m\neq 0$.

We call an \emph{$m$-tiling} of a group $H$ with a tile $u$ an $m$-multitiling $v$ of the group $H$ with some tile $u$ such that $v(h) \in \{0,\,1\}$ for each $h \in H$.

Consider the case $H=\mathbb{Z}/P\mathbb{Z}$. Introduce the polynomials 
$$Q_u(x)=\sum\limits_{a=0}^{P-1}u(a)x^a$$
$$Q_v(x)=\sum\limits_{a=0}^{P-1}v(a)x^a$$

(In the works \cite{fugledepnq}, \cite{pompeiu}, \cite{spectral} polynomials similar to $Q_u(x),\,Q_v(x)$ are referred to as mask polynomials; in \cite{laba} as characteristic polynomials; they also appear in \cite{coven}, \cite{music})

Let $M=\max(l_1,...,l_k)$; $l_1,...,l_k$ are nonnegative integers; let $b>0, c>0$ also be integers.

For $g \in \mathbb{Z}/P\mathbb{Z}$ denote by $\delta_g(h)$ a function on $\mathbb{Z}/P\mathbb{Z}$, which is equal to $1$ at $h=g$, $0$ otherwise.
Also introduce on $\mathbb{Z}/P\mathbb{Z}$ the following function:
$$u_{l_1,...,l_k;b,c;P}(h)=(b+c-2k)\delta_M(h)+\sum\limits_{i=1}^{k}(\delta_{M+l_i}(h)+\delta_{M-l_i}(h)).$$

Denote 
$$A(x)= x^M(b+c-2k+\sum\limits_{i=1}^{k}(x^{l_i}+x^{-l_i})).$$
$$S_P(x)=\prod\limits_{n:\; n \mid P, \;\Phi_n(x) \mid A(x)}\Phi_n(x).$$
$$\tilde{S}_P(x)=\prod\limits_{n:\; n \mid P, \;\Phi_n(x) \mid A(x),\;\text{n is a prime power}}\Phi_n(x).$$ 

Denote by $G_P=C_P(l_1,...,l_k)$ a graph on $P$ vertices, obtained from the graph $C_{\infty}(l_1,...,l_k)$ by factorization of vertices modulo $P$.

We can construct a 1-1 correspondence between $2$-colourings of the graph $G_P$ with parameters $(b,\,c)$ and $c$-\emph{tilings} $v$ of the group $\mathbb{Z}/P\mathbb{Z}$ with the tile $u_{l_1,...,l_k;b,c;P}$: consider a graph $\tilde{G}_P$, which is obtained by adding $b+c-2k$ loops in each vertice. A black-and-white colouring of $G_P$ is perfect with parameters $(b,c)$ if and only if in the same colouring, considered as a colouring of the graph $\tilde{G}_P$, each vertice has exactly $c$ black neighbours (white vertices still have $c$ black neighbours, black vertices had $2k-b$ black neighbours, now they have $(b+c-2k)+(2k-b)=c$ black neighbours). Such black-and-white colourings of the graph $\tilde{G}_P$ (we call them "good") are in a 1-1 correspondence with $c$-\emph{tilings} $v$ of the group $\mathbb{Z}/P\mathbb{Z}$ with the tile $u_{l_1,...,l_k;b,c;P}$: if we color as black exactly the elements $g$ of the group $\mathbb{Z}/P\mathbb{Z}$, for which $v(g)=1$, we obtain a "good"\: colouring of the graph $\tilde{G}_P$; if, in turn, we let $v(g)=1$ for all black vertices $g$ and $v(g)=0$ for white vertices of some "good"\: colouring of the graph $\tilde{G}_P$ with parameters $(b,\,c)$, we obtain a $c$-tiling of the group $\mathbb{Z}/P\mathbb{Z}$ with the tile $u_{l_1,...,l_k;b,c;P}$.

Below the main results of the work are given:

\begin{theorem}\label{pergrcol}
1) If the graph $G_P=C_P(l_1,...,l_k)$ has a perfect $2$-colouring with parameters $(b,\,c)$, then $\tilde{S}_P(1)\vdots \frac{b+c}{gcd(b,c)}$.

2) If $P=q^t$ for some prime number $q$ and integer $t>0$, then the graph $G_P=C_P(l_1,...,l_k)$ has a perfect $2$-colouring with parameters $(b,\,c)$ if and only if $\tilde{S}_P(1)\vdots \frac{b+c}{gcd(b,c)}$.
\end{theorem}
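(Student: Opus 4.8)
The plan is to work entirely through the polynomial reformulation. Recall from the discussion above that a perfect $2$-colouring of $G_P$ with parameters $(b,c)$ is precisely a $c$-tiling $v:\mathbb{Z}/P\mathbb{Z}\to\{0,1\}$ with the tile $u=u_{l_1,\dots,l_k;b,c;P}$, i.e. a $\{0,1\}$-valued solution of \eqref{convol} with $m=c$. Since the cyclic convolution in \eqref{convol} corresponds to the product of mask polynomials modulo $x^P-1$, and since $Q_u(x)\equiv A(x)\pmod{x^P-1}$ (reduce the exponents $M$ and $M\pm l_i$ modulo $P$), this is equivalent to
\[
A(x)\,Q_v(x)\equiv c\,\frac{x^P-1}{x-1}\pmod{x^P-1}.
\]
As $x^P-1$ is monic, the quotient $R:=\bigl(AQ_v-c\tfrac{x^P-1}{x-1}\bigr)/(x^P-1)$ lies in $\mathbb{Z}[x]$, and evaluating $AQ_v-c\tfrac{x^P-1}{x-1}=(x^P-1)R$ at $x=1$ gives $Q_v(1)=\tfrac{cP}{b+c}$; note $Q_v(1)$ is also the number of black vertices since $v\in\{0,1\}$.

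For part 1 I would reduce the displayed congruence modulo $\Phi_n(x)$ for each $n\mid P$ with $n>1$. Since $\Phi_n\mid\tfrac{x^P-1}{x-1}$ this gives $\Phi_n\mid A\,Q_v$, and irreducibility of $\Phi_n$ over $\mathbb{Q}$ forces $\Phi_n\mid A$ or $\Phi_n\mid Q_v$ (this is the multitiling analogue of condition $(T1)$). Hence $T_P(x):=\prod_{n\mid P,\,n>1,\,\Phi_n\nmid A}\Phi_n(x)$ divides $Q_v$, say $Q_v=T_P\cdot W$ with $W\in\mathbb{Z}[x]$ ($T_P$ monic). Because $S_P(x)\,T_P(x)=\prod_{n\mid P,\,n>1}\Phi_n(x)=\tfrac{x^P-1}{x-1}$, we get $T_P(1)=P/S_P(1)$; and by parts 2--3 of the Proposition the only factors of $S_P$ contributing a value $\ne1$ at $x=1$ are the prime-power ones (and $\Phi_1\nmid A$ since $A(1)=b+c>0$), so $S_P(1)=\tilde S_P(1)$. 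Thus $\tfrac{cP}{b+c}=Q_v(1)=\tfrac{P}{\tilde S_P(1)}W(1)$, i.e. $W(1)=\tfrac{c\,\tilde S_P(1)}{b+c}\in\mathbb{Z}$; since $\gcd(b+c,c)=\gcd(b,c)$, this yields $\tfrac{b+c}{\gcd(b,c)}\mid\tilde S_P(1)$.

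For part 2, let $P=q^t$. Its divisors are $1,q,\dots,q^t$, so with $J=\{1\le j\le t:\Phi_{q^j}\mid A\}$ and $s=|J|$ we have $\tilde S_P(1)=q^s$, and the hypothesis forces $\tfrac{b+c}{\gcd(b,c)}=q^r$ with $0\le r\le s$; writing $d=\gcd(b,c)$, $\gamma=c/d$, $\beta=b/d$ we get $\beta+\gamma=q^r$ with $\beta\ge1$, so $\gamma<q^r$. Since $x^P-1$ is separable with root set $\bigcup_{0\le j\le t}\{\text{primitive }q^j\text{-th roots of }1\}$, the congruence of the first paragraph holds iff it holds at each such root: at $1$ it reads $Q_v(1)=\gamma q^{t-r}$, and at a primitive $q^j$-th root ($j\ge1$) it reads $A(\zeta)Q_v(\zeta)=0$, automatic for $j\in J$ and equal to $\Phi_{q^j}\mid Q_v$ otherwise. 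So it suffices to produce $v:\mathbb{Z}/q^t\mathbb{Z}\to\{0,1\}$ with $\Phi_{q^j}\mid Q_v$ for all $j\in J^c:=\{1,\dots,t\}\setminus J$ and with exactly $\gamma q^{t-r}$ ones. I would build $v$ from the base-$q$ digit decomposition: by part 5 of the Proposition, $\Phi_{q^j}(x)=\sum_{a=0}^{q-1}x^{aq^{j-1}}$ is the mask polynomial of the $q$-element set in digit position $j-1$, and since $\{j-1:j\in J^c\}$ and $\{j-1:j\in J\}$ partition $\{0,\dots,t-1\}$, the sets $D=\bigl\{\sum_{j\in J^c}a_jq^{j-1}\bigr\}$ and $E=\bigl\{\sum_{j\in J}a_jq^{j-1}\bigr\}$ (digits $0\le a_j\le q-1$) satisfy $Q_D=\prod_{j\in J^c}\Phi_{q^j}$, $Q_E=\prod_{j\in J}\Phi_{q^j}$, $|D|=q^{t-s}$, $|E|=q^{s}$ and $D\oplus E=\mathbb{Z}/q^t\mathbb{Z}$. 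Since $\gamma q^{s-r}\le q^s=|E|$, choose any $F\subseteq E$ with $|F|=\gamma q^{s-r}$ and let $v$ be the indicator of $D+F$; this sum is direct, so $Q_v=Q_D\,Q_F$ is divisible by each $\Phi_{q^j}$, $j\in J^c$, and $|D+F|=q^{t-s}\cdot\gamma q^{s-r}=\gamma q^{t-r}$, as required.

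I expect the main obstacle to be part 2: identifying that the constraints on $Q_v$ reduce exactly to the cyclotomic divisibilities for $j\notin J$ together with the single cardinality $\gamma q^{t-r}$, and then that these can be realised simultaneously by an honest $\{0,1\}$-tiling — which works precisely because $\gamma<q^r$, i.e. because $b\ge 1$. By contrast, the identity $Q_u\equiv A\pmod{x^P-1}$ and the bookkeeping $S_P(1)=\tilde S_P(1)$ in part 1 are routine.
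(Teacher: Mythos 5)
Your proof is correct and follows essentially the same route as the paper: the paper proves a general lemma for arbitrary tiles $u$ and multiplicities $m$ (the divisibility criterion $m\,\tilde d_u(1)\vdots Q_u(1)$ obtained by splitting $\frac{x^P-1}{x-1}$ into the cyclotomic factors dividing $A$ and those forced to divide $Q_v$, plus the same base-$q$ digit construction of a $\{0,1\}$-valued $Q_v=Q_F\cdot\prod_{j\notin J}\Phi_{q^j}$ when $P=q^t$) and then specializes to $u_{l_1,\dots,l_k;b,c;P}$, $m=c$. Your evaluation at roots of unity and your direct-sum argument for the $0/1$ coefficients are just equivalent phrasings of the paper's quotient-polynomial and coefficient-domination arguments.
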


\begin{theorem}\label{pergrcolfirst}
If there exists a circulant graph with $k$ distances, which has a perfect $2$-colouring with parameters $(b,\,c)$, then for each prime $q$ and positive integer $t$ such that $\frac{b+c}{gcd(b,c)} \vdots q^t$, it holds that
$b+c \leq 2k+\frac{b+c}{q^t}$.
\end{theorem}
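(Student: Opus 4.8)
The plan is to pass to a finite circulant graph, invoke Theorem~\ref{pergrcol}(1), and then unwind the resulting divisibility into a purely polynomial statement about $A(x)$. First I would use the reduction to finite graphs mentioned in the introduction to produce integers $P$ and nonnegative $l_1,\dots,l_k$ (with $M=\max(l_1,\dots,l_k)$) such that $G_P=C_P(l_1,\dots,l_k)$ carries a perfect $2$-colouring with parameters $(b,c)$. Theorem~\ref{pergrcol}(1) then gives $\tilde S_P(1)$ divisible by $\frac{b+c}{\gcd(b,c)}$, hence (by hypothesis) by $q^t$. Since $\tilde S_P(1)$ is a product of the numbers $\Phi_n(1)$ over prime powers $n=p^j\mid P$ with $\Phi_n\mid A$, and each $\Phi_{p^j}(1)$ equals $p$ by the Proposition (part~3), the exponent of $q$ in $\tilde S_P(1)$ is exactly $\#\{j\ge1:q^j\mid P,\ \Phi_{q^j}\mid A\}$, so this set, and a fortiori the finite set $J=\{j\ge1:\Phi_{q^j}(x)\mid A(x)\}$, has at least $t$ elements. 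If $b+c<2k$ the inequality is immediate, so I would assume $b+c\ge2k$; then $A(x)=x^M\bigl(b+c-2k+\sum_i(x^{l_i}+x^{-l_i})\bigr)$ has nonnegative coefficients, $A(1)=b+c$, and its coefficient of $x^M$ equals $b+c-2k+2\#\{i:l_i=0\}\ge b+c-2k$. The theorem now follows once I prove $[x^M]A\le\frac{b+c}{q^{|J|}}$, since $q^{|J|}\ge q^t$ then gives $b+c-2k\le[x^M]A\le\frac{b+c}{q^t}$.

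To prove $[x^M]A\le\frac{b+c}{q^{|J|}}$, I would set $J_0=\max J$, $N=q^{J_0}$, and introduce the cyclotomic products $D(x)=\prod_{j\in J}\Phi_{q^j}(x)$ and $E(x)=\prod_{1\le j\le J_0,\ j\notin J}\Phi_{q^j}(x)$; by parts~3--5 of the Proposition, $D(x)E(x)=1+x+\dots+x^{N-1}$ and $D(1)=q^{|J|}$. Let $\bar A(x)$ be the reduction of $A(x)$ modulo $x^N-1$: it has nonnegative coefficients, degree $<N$, $\bar A(1)=b+c$, and its coefficient of $x^{M\bmod N}$ is $\ge[x^M]A$ (it is a sum of nonnegative coefficients of $A$, one of which is $[x^M]A$). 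Each $\Phi_{q^j}$ with $j\in J$ divides $A$ and divides $x^N-1$, hence divides $\bar A$, so $D\mid\bar A$; writing $\bar A=D\bar R$ with $\bar R\in\mathbb{Z}[x]$ we get $\deg\bar R\le N-1-\deg D<N$ and $\bar R(1)=(b+c)/q^{|J|}$. Multiplying $\bar A=D\bar R$ by $E$ gives the polynomial identity $\bar A(x)E(x)=\bar R(x)(1+x+\dots+x^{N-1})$, whose coefficient of $x^m$ on the right is the length-$N$ window sum $\sum_{i=m-N+1}^{m}[x^i]\bar R$, while on the left all coefficients are nonnegative because $\bar A$ and $E$ are. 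Hence every length-$N$ window sum of the coefficients of $\bar R$ is $\ge0$; because $\deg\bar R<N$, this in particular forces every prefix sum and every suffix sum of the coefficients of $\bar R$ to be $\ge0$. Finally, since $E(0)=1$ and $\bar A,E$ have nonnegative coefficients, the coefficient of $x^{M\bmod N}$ in $\bar A$ is at most that of $\bar A E$, which is a length-$N$ window sum of $\bar R$ and --- again using $\deg\bar R<N$ --- equals $\bar R(1)$ minus a suffix sum of $\bar R$, hence is $\le\bar R(1)$. Chaining the bounds yields $b+c-2k\le[x^M]A\le[x^{M\bmod N}]\bar A\le\bar R(1)=(b+c)/q^{|J|}$, as needed.

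The cyclotomic bookkeeping (that $D$ and $E$ are products of the listed $\Phi_{q^j}$, that their product telescopes to $1+x+\dots+x^{N-1}$ via $x^N-1=\prod_{d\mid N}\Phi_d$, and that reduction modulo $x^N-1$ preserves both divisibility by $D$ and nonnegativity of coefficients), together with the passage from the infinite graph to a finite $G_P$, is routine. The step I expect to be the main obstacle is the elementary lemma underlying the second paragraph: once $A$ has nonnegative coefficients and $\prod_{j\in J}\Phi_{q^j}\mid A$, \emph{every} coefficient of the folded polynomial $\bar A$ is bounded by $A(1)/q^{|J|}$. The device that makes this work is to bring in the complementary factor $E$, so that $\bar A E=\bar R\cdot(1+x+\dots+x^{N-1})$ turns coefficients of $\bar A E$ into length-$N$ window sums of $\bar R$; the degree bound $\deg\bar R<N$ then forces the complement (inside the coefficient range of $\bar R$) of any such window to be a single prefix or a single suffix, which is itself a window sum and hence nonnegative. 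Folding to $\mathbb{Z}/N\mathbb{Z}$ is genuinely needed here: working directly with $A$ of degree $2M$ one loses the bound $\deg\bar R<N$, and the complement of a window can become an interior block whose sum is not controlled.
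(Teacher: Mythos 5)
Your proof is correct. The skeleton --- pass to a finite $G_P$, convert the colouring hypothesis into $q^t \mid \tilde S_P(1)$, hence into the existence of at least $t$ indices $j$ with $\Phi_{q^j}(x)\mid A(x)$, and then exploit nonnegativity of the coefficients of $A$ (after disposing of the trivial case $b+c<2k$) --- is the same as the paper's, which routes the theorem through Lemmas \ref{maindivrask} and \ref{tilingdiv}. Where you genuinely diverge is in how the inequality is extracted from the cyclotomic divisors. The paper works with the residue-class sums $h_{j,r}=\sum_{q^{j}\mid(r'-r)}a_{r'}$ and shows (Claim \ref{ineqq}) that each divisor $\Phi_{q^{s_i}}\mid A$ forces $h_{s_{i-1},M}\ge q\,h_{s_i,M}$, because the divisibility makes the $q$ subclasses modulo $q^{s_i}$ of a class modulo $q^{s_i-1}$ carry equal sums; iterating $t$ times gives $b+c\ge q^t(b+c-2k)$ directly, and this works without a case split since only the coefficient at $x^M$ can be negative. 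You instead fold $A$ modulo $x^N-1$ with $N=q^{\max J}$, factor out $D=\prod_{j\in J}\Phi_{q^j}$, and bring in the complementary product $E$ so that $\bar A E=\bar R\cdot(1+x+\dots+x^{N-1})$; nonnegativity of the coefficients of $\bar A E$ together with $\deg\bar R<N$ turns every length-$N$ window sum of $\bar R$ into a nonnegative prefix or suffix sum, which pins the relevant coefficient of $\bar A$ below $\bar R(1)=(b+c)/q^{|J|}$. Your intermediate quantity $[x^{M\bmod N}]\bar A$ is exactly the paper's $h_{J_0,M}$, so the two arguments establish the same bound; the paper's iterated averaging is shorter and more local, while your complementary-factor device proves the slightly stronger statement that \emph{every} coefficient of the folded polynomial is at most $A(1)/q^{|J|}$, and is closer in spirit to the Coven--Meyerowitz-style polynomial manipulations the paper cites. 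All the individual steps you flag as routine (the $\mathbb{Z}[x]$ quotient by the monic $D$, nonnegativity of $E$, the prefix/suffix reduction of window sums) do check out.
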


\begin{corollary}
1) No infinite circulant graph with $2$ distances has a perfect $2$-colouring with parameters $(4,3)$. 

2) No infinite circulant graph with $3$ distances has a perfect $2$-colouring with parameters $(5,3),\,(5,4),\,(6,4)$ or $(6,5)$.

3) No infinite circulant graph with $4$ distances has a perfect $2$-colouring with parameters $(6,5),\,(7,4),\,(8,3),\,(7,5),\,(7,6),\,(8,5),\,(8,6)$ or $(8,7)$.
\end{corollary}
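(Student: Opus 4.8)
The plan is to derive every item as a direct application of the contrapositive of Theorem~\ref{pergrcolfirst}. Concretely, if for a pair $(b,c)$ and an integer $k$ we can exhibit a prime $q$ and a positive integer $t$ with $\frac{b+c}{\gcd(b,c)}\vdots q^t$ for which the inequality of Theorem~\ref{pergrcolfirst} fails, i.e.
\[ b+c > 2k + \frac{b+c}{q^t}, \]
then no circulant graph with $k$ distances can have a perfect $2$-colouring with parameters $(b,c)$. So the task reduces entirely to producing, for each listed triple $(b,c,k)$, such a witness $(q,t)$ and then checking the strict inequality, which is a finite and routine computation.

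For part (1) I would take $k=2$, $(b,c)=(4,3)$, so $\frac{b+c}{\gcd(b,c)}=7$, and the witness $q=7,\,t=1$ gives $2k+\frac{b+c}{q^t}=5<7$. For part (2), with $k=3$: for $(5,3)$ use $q=2,\,t=3$ (here $b+c=8$); for $(5,4)$ use $q=3,\,t=2$ (here $b+c=9$); for $(6,4)$ use $q=5,\,t=1$ (here $\frac{b+c}{\gcd}=5$); for $(6,5)$ use $q=11,\,t=1$. For part (3), with $k=4$: for each of $(6,5),(7,4),(8,3)$ we have $\frac{b+c}{\gcd}=11$ and use $q=11,\,t=1$; for $(7,6)$ and $(8,5)$ we have $\frac{b+c}{\gcd}=13$ and use $q=13,\,t=1$; for $(8,6)$ we have $\frac{b+c}{\gcd}=7$ and use $q=7,\,t=1$; for $(8,7)$ we have $\frac{b+c}{\gcd}=15$ and use $q=3,\,t=1$. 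In each of these the right-hand side $2k+\frac{b+c}{q^t}$ is at most $b+c-1$, so the strict inequality holds.

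The one case worth singling out is $(7,5)$ with $k=4$, which is the only place where the choice of witness is not forced: here $\frac{b+c}{\gcd(b,c)}=12$, and the small candidates fail — $q=2,\,t=1$ gives $8+6=14\geq 12$ and $q=3,\,t=1$ gives $8+4=12\geq 12$ — so one must take $q=2,\,t=2$, which divides $12$ and gives $8+3=11<12$. Thus the only substantive point of the argument is to choose $q^t$ large enough that $\frac{b+c}{q^t}<b+c-2k$ while still keeping $q^t\mid\frac{b+c}{\gcd(b,c)}$; since in every listed case $b+c$ admits a suitable divisor, the corollary follows.
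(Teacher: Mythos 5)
Your proposal is correct and is exactly the intended argument: the paper states the corollary as an immediate consequence of Theorem~\ref{pergrcolfirst} without further proof, and your case-by-case choice of witnesses $(q,t)$ (including the only non-obvious case $(7,5)$ with $q^t=4$) checks out arithmetically in every instance.
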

Thus, the hypothesis of inadmissibility of parameters $(5,3)$ for $3$ distances, stated in \cite{khoroshilova09}, is proven.

When $b+c$ is a prime power, one can obtain a necessary and sufficient condition for the existence of a circulant graph with $k$ distances, which has a perfect $2$-colouring with parameters $(b,\,c)$:

\begin{theorem}
\label{lasthm}
Let $b+c=q^s$ for some integer $s>0$ and prime $q$. Then there exists a circulant graph with $k$ distances and its perfect $2$-colouring with parameters $(b,\,c)$, if and only if
$b+c \leq 2k+gcd(b,\,c)$.
\end{theorem}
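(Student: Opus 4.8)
The plan is to prove the two implications separately. \emph{Necessity} is immediate from Theorem~\ref{pergrcolfirst}: if some circulant graph with $k$ distances admits a perfect $2$-colouring with parameters $(b,c)$, then since $\gcd(b,c)$ divides $b+c=q^s$ it is a power of $q$, say $\gcd(b,c)=q^r$ with $r\le s-1$ (because $\gcd(b,c)\le\min(b,c)<b+c$), and $\frac{b+c}{\gcd(b,c)}=q^{s-r}$; applying Theorem~\ref{pergrcolfirst} with the prime $q$ and exponent $t=s-r$ yields $b+c\le 2k+\frac{b+c}{q^{s-r}}=2k+q^r=2k+\gcd(b,c)$.

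For \emph{sufficiency}, assume $q^s\le 2k+\gcd(b,c)$, write $\gcd(b,c)=q^r$ with $0\le r\le s-1$, so the hypothesis reads $2k\ge q^s-q^r$. By Theorem~\ref{pergrcol}, part~2, it is enough to produce, for a suitable prime power $P$, distances $l_1,\dots,l_k$ with $\tilde{S}_P(1)\vdots\frac{b+c}{\gcd(b,c)}=q^{s-r}$; and since $\tilde{S}_P(1)$ is a power of $q$ whose exponent counts the $j\ge1$ with $\Phi_{q^j}(x)\mid A(x)$ (Proposition, part~3), it suffices to arrange that $\Phi_{q^j}(x)\mid A(x)$ for at least $s-r$ values of $j$. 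Up to the factor $x^M$, $A(x)$ equals the palindromic Laurent polynomial $f(x)=(b+c-2k)+\sum_{i=1}^k(x^{l_i}+x^{-l_i})$, which has $f(1)=b+c$ and constant term $b+c-2k$; conversely any palindromic Laurent polynomial $f$ with nonnegative integer coefficients away from the origin, with $f(1)=b+c$ and constant term $b+c-2k$, is (up to $x^M$) the $A(x)$ of an admissible tuple of $k$ distances, read off with multiplicity from its exponents. So the task reduces to building such an $f$ that is in addition divisible by $\prod_{j\in J}\Phi_{q^j}$ for a set $J$ of $s-r$ exponents --- divisibility being essential, since one cannot add a nonzero constant to a multiple of $\prod_{j\in J}\Phi_{q^j}$ and keep its value zero at the corresponding roots of unity.

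It suffices to treat the minimal case ($2k=q^s-q^r$ when $q$ is odd or $q=2$, $r\ge1$; and $2k=q^s$ when $q=2$, $r=0$), because a perfect $2$-colouring of $C_P(l_1,\dots,l_k)$ extends periodically (colour $n\in\mathbb{Z}$ by the colour of $n\bmod P$) to a perfect $2$-colouring of $C_\infty(l_1,\dots,l_k)$ with the same parameters, and adjoining further distances that are multiples of $P$ changes nothing (the new neighbours of each vertex repeat its colour), so any larger number of distances comes for free. For $q$ odd I take $P=q^s$, $J=\{r+1,\dots,s\}$, and $f(x)=q^r\,x^{-(q^s-q^r)/2}\,\dfrac{x^{q^s}-1}{x^{q^r}-1}$, i.e.\ the distances $q^r\mu$ for $1\le\mu\le\frac{q^{s-r}-1}{2}$, each with multiplicity $q^r$. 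For $q=2$, $r\ge1$, where this centering has a half-integral exponent, I take $P=2^s$, $J=\{r+1,\dots,s\}$, and $f(x)=2^{r-1}\bigl(1+x^{-(2^s-2^r)}\bigr)\dfrac{x^{2^s}-1}{x^{2^r}-1}$, i.e.\ the distances $2^r\mu$ for $1\le\mu\le 2^{s-r}-1$, each with multiplicity $2^{r-1}$. For $q=2$, $r=0$ (so $b,c$ are odd) I enlarge the modulus to $P=2^{s+1}$, take $J=\{2,\dots,s+1\}$, and $f(x)=x^{-(2^s-1)}\dfrac{x^{2^{s+1}}-1}{x^{2}-1}$, i.e.\ the odd distances $1,3,\dots,2^s-1$, each with multiplicity $1$. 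In each case one checks directly --- using Proposition parts 3 and 4 to write $\prod_{j\in J}\Phi_{q^j}(x)$ as a ratio $\frac{x^{P}-1}{x^{d}-1}$ with $d$ a power of $q$, and evaluating at roots of unity --- that $f$ is palindromic with nonnegative coefficients, that $f(1)=b+c$ and its constant term equals $b+c-2k$ for the stated $k$, and that $\Phi_{q^j}(x)\mid A(x)$ precisely for $j\in J$; hence $\tilde{S}_P(1)=q^{|J|}=q^{s-r}$, and Theorem~\ref{pergrcol}, part~2, delivers the perfect $2$-colouring of $C_P(l_1,\dots,l_k)$, which lifts to $C_\infty(l_1,\dots,l_k)$.

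The step I expect to be the main obstacle is precisely this construction of $f$ when $q=2$: the polynomial $x^{-(q^s-q^r)/2}\frac{x^{q^s}-1}{x^{q^r}-1}$ that works verbatim for odd $q$ has a non-integral exponent, so one must either absorb a factor $2$ into its coefficients (possible only when $r\ge1$) or pass to the modulus $2^{s+1}$, and in each variant one must simultaneously match the prescribed value $b+c$ at $1$, the prescribed constant term $b+c-2k$, nonnegativity of the remaining coefficients, and the target divisibility. Everything else --- the reduction through Theorem~\ref{pergrcol}, the periodic extension, and the padding by dummy distances --- should be routine.
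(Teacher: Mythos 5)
Your proof is correct, and the necessity half coincides with the paper's (a one-line application of Theorem~\ref{pergrcolfirst} with $t=s-r$). For sufficiency, however, you take a genuinely more self-contained route: the paper simply invokes Lemma~\ref{tilingdiv} (whose ``$\Leftarrow$'' direction rests on Proposition~\ref{l_prim}, a Chinese-remainder construction covering all prime divisors of $\frac{b+c}{\gcd(b,c)}$ simultaneously), notes that the resulting modulus $P$ is automatically a power of $q$, and then applies Lemma~\ref{maindivrask} and Theorem~\ref{pergrcol}(2). You instead rebuild the needed special case of that machinery from scratch: you reduce to the minimal $k$ by the padding trick (adjoining distances that are multiples of $P$, which preserves the parameters $(b,c)$ since the new neighbours repeat each vertex's colour --- a valid and clean observation the paper does not state explicitly), and you give closed-form tiles $q^r x^{-(q^s-q^r)/2}\frac{x^{q^s}-1}{x^{q^r}-1}$ and its two $q=2$ variants, whose required properties (palindromy, nonnegativity, value $b+c$ at $1$, constant term $b+c-2k$, divisibility by $\prod_{j}\Phi_{q^j}$ for $s-r$ indices $j$) I have checked and which all hold; your case split at $q=2$ (absorbing a factor $2$ when $r\ge 1$, passing to modulus $2^{s+1}$ when $r=0$) mirrors exactly the three cases of Proposition~\ref{l_prim} specialized to a single prime. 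What the paper's route buys is reuse of an already-proven general lemma; what yours buys is explicit formulas and independence from the multi-prime CRT argument, at the cost of duplicating the construction. I see no gap.
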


\section{Polynomial method for multitilings}
\begin{proposition}
The condition (\ref{convol}) is equivalent to 
\begin{equation}\label{polyeq}
Q_u(x)Q_v(x)-m\frac{x^P-1}{x-1}\vdots (x^P-1).
\end{equation}
\end{proposition}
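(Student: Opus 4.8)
The plan is to recognize that the quotient ring $\mathbb{Z}[x]/(x^P-1)$ is naturally identified with the group ring of $\mathbb{Z}/P\mathbb{Z}$, under which multiplication of mask polynomials modulo $x^P-1$ becomes exactly the cyclic convolution appearing in (\ref{convol}). So the proposition is a dictionary translation, and the proof amounts to comparing coefficients on both sides.

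Concretely, I would first expand
$Q_u(x)Q_v(x)=\sum_{a=0}^{P-1}\sum_{b=0}^{P-1}u(a)v(b)\,x^{a+b}$
and reduce modulo $x^P-1$. Since $x^P\equiv 1$, each monomial $x^{a+b}$ is replaced by $x^{(a+b)\bmod P}$, so the coefficient of $x^g$ (for $0\le g\le P-1$) in the reduced polynomial equals $\sum_{a+b\equiv g\,(\mathrm{mod}\ P)}u(a)v(b)$; substituting $a=g-h$ with all index arithmetic read in $\mathbb{Z}/P\mathbb{Z}$, this is precisely $\sum_{h\in \mathbb{Z}/P\mathbb{Z}}u(g-h)v(h)$, the left-hand side of (\ref{convol}). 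On the other side, $m\frac{x^P-1}{x-1}=m(1+x+\cdots+x^{P-1})$ has coefficient $m$ at every power $x^g$ with $0\le g\le P-1$. Hence (\ref{convol}) holding for all $g$ says exactly that $Q_u(x)Q_v(x)$ and $m\frac{x^P-1}{x-1}$ reduce to the same element modulo $x^P-1$, i.e. $(x^P-1)$ divides their difference, which is (\ref{polyeq}).

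The one point requiring care — and the only place the argument could slip — is justifying that comparison of coefficients is valid in both directions. Since $x^P-1$ is monic, the division algorithm in $\mathbb{Z}[x]$ gives every polynomial a unique remainder of degree $<P$ modulo $x^P-1$; therefore two polynomials are congruent modulo $x^P-1$ if and only if their (unique) degree-$<P$ representatives have equal coefficients for each of $x^0,\dots,x^{P-1}$. Applying this equivalence to $Q_u(x)Q_v(x)$ (whose degree may be as large as $2P-2$, so the reduction step is genuinely used) and to $m\frac{x^P-1}{x-1}$ (already of degree $P-1$) closes the loop. There is no real analytic or combinatorial obstacle; the main thing to get right is keeping the reindexing $a+b\equiv g$ consistently interpreted in $\mathbb{Z}/P\mathbb{Z}$ when passing to the convolution form.
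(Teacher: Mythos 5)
Your proof is correct and follows essentially the same route as the paper: expand $Q_u(x)Q_v(x)$, reduce modulo $x^P-1$, and compare coefficients of $x^g$ with those of $m\frac{x^P-1}{x-1}=m(1+x+\cdots+x^{P-1})$, using the change of index $a=g-h$ in $\mathbb{Z}/P\mathbb{Z}$ to recover the convolution. Your added remark about the monic division algorithm guaranteeing that coefficient comparison is valid in both directions is a sound (if implicit in the paper) justification; nothing is missing.
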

\begin{proof}
Remark that
\begin{align*}
Q_u(x)Q_v(x)=\sum\limits_{c=0}^{P-1}\left(\sum\limits_{0 \leq a,b \leq P-1;\;a+b \equiv c \;(mod\;p)}u(a)v(b)\right)x^{a+b}\\ \equiv 
\sum\limits_{c=0}^{P-1}\left(\sum\limits_{0 \leq a,b \leq P-1;\;a+b \equiv c \;(mod\;p)}u(a)v(b)\right)x^{c}\;(mod\;x^P-1)
\end{align*}
On the other hand,
\begin{align*}
m \frac{x^P-1}{x-1}=\sum\limits_{c=0}^{P-1} m x^c
\end{align*}
Hence, the condition (\ref{polyeq}) is equivalent to 
\begin{align*}
\left(\sum\limits_{0 \leq a,b \leq P-1;\;a+b \equiv c \;(mod\;p)}u(a)v(b)\right)=m
\end{align*}
for all $c$, that is, (\ref{convol}).
\end{proof}

A divisibility condition similar to (\ref{polyeq}) is also used in \cite{coven}, \cite{laba}, \cite{music}, \cite{spectral}.

Introduce analogs of the polynomials $S_A$ from \cite{fugledepnq}, which also appear in \cite{coven}, \cite{pompeiu}, \cite{laba}, \cite{music}, \cite{spectral}: 
$$d_u(x)=\prod\limits_{n \mid P,\;\Phi_n(x) \mid Q_u(x)}\Phi_n(x).$$
$$\tilde{d}_u(x)=\prod\limits_{n \mid P,\;\Phi_n(x) \mid Q_u(x),\, n\text{ is a prime power}}\Phi_n(x)$$
\begin{lemma}\label{multdivlm}
1) Let $m \in \mathbb{Z},\,m \neq 0$. Then an $m$-multitiling $v$ of the group $\mathbb{Z}/P\mathbb{Z}$ with a tile $u:\mathbb{Z}/P\mathbb{Z} \to \mathbb{Z}$ exists if and only if $m\cdot \tilde{d}_u(1) \vdots Q_u(1)$.

2) If $m\cdot \tilde{d}_u(1) \vdots Q_u(1)$, $P=q^t$ for some prime $q$ and positive integer $t$ and, in addition, $0<m\leq Q_u(1)$, then there exists an $m$-tiling $v$ of the group $\mathbb{Z}/P\mathbb{Z}$ with the tile $u:\mathbb{Z}/P\mathbb{Z} \to \mathbb{Z}$.
\end{lemma}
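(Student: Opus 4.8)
The plan is to run everything through the preceding Proposition, so that, by~(\ref{polyeq}), an $m$-multitiling $v$ with tile $u$ is exactly a polynomial $Q_v\in\mathbb{Z}[x]$ with $Q_u(x)Q_v(x)-m\,\Sigma(x)$ divisible by $x^P-1$, where I abbreviate $\Sigma(x):=\frac{x^P-1}{x-1}=1+x+\dots+x^{P-1}$. Since $x^P-1=\prod_{d\mid P}\Phi_d(x)$ is a product of pairwise distinct monic irreducibles, this divisibility is equivalent to $\Phi_d\mid Q_uQ_v-m\Sigma$ in $\mathbb{Z}[x]$ for every $d\mid P$, hence to $Q_u(\omega)Q_v(\omega)=m\,\Sigma(\omega)$ for one (equivalently every) primitive $d$-th root of unity $\omega$, for each $d\mid P$; here $\Sigma(1)=P$, while $\Sigma(\omega)=0$ for every root of unity $\omega\neq1$ of order dividing $P$. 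All three parts of the statement will be handled in this dictionary, and I shall freely use $\Phi_{p^k}(1)=p$, $\Phi_n(1)=1$ for non--prime--power $n>1$, and $\Phi_{p^k}(x)=\sum_{l=0}^{p-1}x^{p^{k-1}l}$ from the properties listed above.

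\emph{Necessity in part~1.} Let $v$ be an $m$-multitiling. The case $\omega=1$ gives $Q_u(1)Q_v(1)=mP$, so $Q_u(1)\neq0$ (as $m\neq0$) and $Q_u(1)\mid mP$. For every prime power $p^j\mid P$ we have $\Phi_{p^j}\mid\Sigma$, hence $\Phi_{p^j}\mid Q_uQ_v$, hence $\Phi_{p^j}$ divides $Q_u$ or $Q_v$ by irreducibility. I would then fix a prime $p$, set $e=v_p(P)$ (writing $v_p$ for $p$-adic valuation), and compute $p$-adically: a product of distinct monic factors of $Q_v$ still divides $Q_v$ and $\Phi_{p^j}(1)=p$, so $v_p(Q_v(1))\ge\#\{1\le j\le e:\Phi_{p^j}\mid Q_v\}\ge e-\#\{1\le j\le e:\Phi_{p^j}\mid Q_u\}=e-v_p(\tilde d_u(1))$; substituting into $v_p(Q_u(1))=v_p(m)+e-v_p(Q_v(1))$ gives $v_p(Q_u(1))\le v_p(m\,\tilde d_u(1))$, and primes $p\nmid P$ follow at once from $Q_u(1)\mid mP$. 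Hence $Q_u(1)\mid m\,\tilde d_u(1)$.

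\emph{Sufficiency in part~1.} Assume $Q_u(1)\mid m\,\tilde d_u(1)$; then $Q_u(1)\neq0$, so $\Phi_1\nmid Q_u$ and $d_u$ is a product of $\Phi_n$ with $n>1$. The only subtle point is $d_u(1)=\tilde d_u(1)$: the factors of $d_u$ not in $\tilde d_u$ have non--prime--power index $n$, so $\Phi_n(1)=1$. Since $d_u$ divides both the monic polynomials $\Sigma$ and $Q_u$, the quotients $D:=\Sigma/d_u$ and $Q_u^{\ast}:=Q_u/d_u$ lie in $\mathbb{Z}[x]$, with $Q_u^{\ast}(1)=Q_u(1)/\tilde d_u(1)$. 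I would then take
$$Q_v(x):=\frac{m\,\tilde d_u(1)}{Q_u(1)}\,D(x)\in\mathbb{Z}[x],$$
so that $Q_uQ_v=\frac{m\,\tilde d_u(1)}{Q_u(1)}Q_u^{\ast}\,\Sigma$; since $\frac{m\,\tilde d_u(1)}{Q_u(1)}Q_u^{\ast}(1)=m$, the polynomial $\frac{m\,\tilde d_u(1)}{Q_u(1)}Q_u^{\ast}-m$ vanishes at $1$ and so is divisible by $x-1$, and multiplying by $\Sigma$ (with $(x-1)\Sigma=x^P-1$) shows $Q_uQ_v-m\Sigma$ is divisible by $x^P-1$. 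Reading off the coefficients of $Q_v$ modulo $x^P-1$ gives the multitiling.

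\emph{Part~2.} When $P=q^t$ every divisor of $P$ exceeding $1$ is a prime power, so $\tilde d_u=d_u=\prod_{j\in J}\Phi_{q^j}$ with $J=\{1\le j\le t:\Phi_{q^j}\mid Q_u\}$ and $\tilde d_u(1)=q^{|J|}$. Set $\lambda:=m q^{|J|}/Q_u(1)$; by the hypotheses this is a positive integer with $\lambda\le q^{|J|}$, the bound being precisely $m\le Q_u(1)$. I would identify $\mathbb{Z}/q^t\mathbb{Z}$ with base-$q$ digit tuples $(a_0,\dots,a_{t-1})$, pick any set $T$ of $\lambda$ tuples of digits in the $|J|$ positions $\{\,j-1:j\in J\,\}$, and let $B$ be the set of residues whose digits in those positions form a tuple of $T$ and whose remaining digits are arbitrary. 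The two blocks of positions are disjoint, so no carrying occurs and $Q_B(x)=\bigl(\prod_{j\in\{1,\dots,t\}\setminus J}\Phi_{q^j}(x)\bigr)Q_T(x)$ (the block with all digits free in position $j-1$ contributing exactly $\sum_{l=0}^{q-1}x^{q^{j-1}l}=\Phi_{q^j}(x)$); thus $\Phi_{q^j}\mid Q_B$ for all $j\notin J$, and $|B|=q^{t-|J|}\lambda=m q^t/Q_u(1)$. Checking that $Q_uQ_B-m\Sigma$ is divisible by $x^{q^t}-1$ by evaluation: at $1$ it says $Q_u(1)|B|=m q^t$, true by the size of $B$; at a primitive $q^j$-th root of unity with $j\in J$ the left side vanishes because $\Phi_{q^j}\mid Q_u$, and with $j\notin J$ because $\Phi_{q^j}\mid Q_B$, matching $\Sigma=0$ there. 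So $v:=\mathbf 1_B$ is an $m$-tiling. I expect part~2 to be the main obstacle: necessity in part~1 is $p$-adic bookkeeping and sufficiency is the explicit $Q_v$ above, whereas part~2 demands an honest $0$--$1$ solution, and the heart of it is recognizing that the base-$q$ ``box'' automatically carries the prescribed cyclotomic divisibilities and that the hypotheses $Q_u(1)\mid m q^{|J|}$ and $m\le Q_u(1)$ are exactly what make this box have the right cardinality and fit inside $\mathbb{Z}/q^t\mathbb{Z}$.
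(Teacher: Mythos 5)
Your proof is correct and follows essentially the same route as the paper: the dictionary via the cyclotomic factorization of $x^P-1$, the observation that each prime-power $\Phi_{p^j}$ must divide $Q_u$ or $Q_v$ (yielding the generalized $(T1)$ condition), the explicit choice $Q_v=\frac{m\tilde d_u(1)}{Q_u(1)}\cdot\frac{x^P-1}{(x-1)d_u(x)}$ for sufficiency, and for part~2 a $0$--$1$ solution built from $\lambda$ monomials of $d_u$ times $\prod_{j\notin J}\Phi_{q^j}$ — your base-$q$ ``box'' $B$ is exactly the paper's $R_v(x)\frac{x^P-1}{(x-1)d_u(x)}$ described combinatorially. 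The only cosmetic difference is that for necessity you track $p$-adic valuations of evaluations at $1$ directly rather than first deducing, via coprimality, that $Q_v$ is a multiple of $\frac{x^P-1}{(x-1)d_u(x)}$; both arguments are sound.
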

\begin{proof}
As $x^P-1 = \prod\limits_{n \mid P}\Phi_n(x)$, where $\Phi_n(x)$ are irreducible over $\mathbb{Q}[x]$ (in particular, they are pairwise coprime), we obtain $gcd(Q_u(x),\,x^P-1)=\prod\limits_{\Phi_n(x) \mid (x^P-1), \Phi_n(x) \mid Q_u(x)} \Phi_n(x)=\prod\limits_{n \mid P,\Phi_n(x) \mid Q_u(x)}\Phi_n(x)=d_u(x)$.
 If $Q_u(1)=0$, then the condition (\ref{polyeq}) does not hold; but if $Q_u(1) \neq 0$, then $(x-1) \nmid d_u(x)$, hence, (\ref{polyeq}) is equivalent to
\begin{equation}\label{polyeq2}
\frac{Q_u(x)}{d_u(x)}Q_v(x)-m\frac{x^P-1}{(x-1)d_u(x)}\vdots \frac{x^P-1}{d_u(x)}.
\end{equation}
From the definition of $d_u$ the polynomials $\frac{Q_u(x)}{d_u(x)}$ and $\frac{x^P-1}{(x-1)d_u(x)}$ are coprime, hence, due to (\ref{polyeq2}), $Q_v(x)\vdots \frac{x^P-1}{(x-1)d_u(x)}$, that is, $Q_v(x)$ is representable as $\frac{x^P-1}{(x-1)d_u(x)}R_v(x)$, where $R_v(x)$ is a polynomial with integer coefficients such that $deg(R_v)+deg\left(\frac{x^P-1}{(x-1)d_u(x)}\right)\leq P-1$. Then (\ref{polyeq2}) is equivalent to $\frac{Q_u(1)}{d_u(1)}R_v(1)=m$. In particular, 
\begin{equation}\label{maindiv}
m\cdot d_u(1) \vdots Q_u(1).
\end{equation}

If $n>1$ and $n$ is not a prime power then $\Phi_n(1)=1$, hence, $d_u(1)=\tilde{d}_u(1)$.

Consequently, point 1) is proved in one direction.

Conversely, remark that if (\ref{maindiv}) is satisfied, then one can take $R_v(x)=\frac{m d_u(1)}{Q_u(1)}$, $Q_v(x)=\frac{m d_u(1)}{Q_u(1)}\frac{x^P-1}{(x-1)d_u(x)}$, which provides an $m$-multitiling of the group $\mathbb{Z}/P\mathbb{Z}$ with the tile $u$.

\vspace{5pt} 

In order to prove the point 2) it is enough to contrust a polynomial $Q_v(x)$, which satisfies (\ref{polyeq}), whose coefficients are equal to either $0$ or $1$. As $Q_v(x)$ can be represented as $ \frac{x^P-1}{(x-1)d_u(x)}R_v(x)$, it is enough to construct $R_v(x)$
with integer coefficients of degree not larger than $P-1-deg\left(\frac{x^P-1}{(x-1)d_u(x)}\right)=deg(d_u(x))$ such that $R_v(1)=\frac{md_u(1)}{Q_u(1)}$, and each coefficient of $\frac{x^P-1}{(x-1)d_u(x)}R_v(x)$ equals either $0$ or $1$. As $d_u(x) \mid \frac{x^{q^t}-1}{x-1}=\prod\limits_{l=1}^{t}\Phi_{q^l}(x)$, there exists $X \subset \{1,...,t\}$ such that 
$$\tilde{d}_u(x)=d_u(x)=\prod\limits_{r \in X} \Phi_{q^r}(x)=
\prod\limits_{r \in X}\sum\limits_{i=0}^{q-1} x^{q^{r-1}\cdot i}.$$

Hence, all coefficients of $d_u(x)$ are equal to either $0$ or $1$. As due to the conditions of the lemma $0<\frac{md_u(1)}{Q_u(1)}\leq d_u(1)$, one can take as $R_v(x)$ a sum of arbitrary $\frac{md_u(1)}{Q_u(1)}$ monomials whose coefficients are equal to $1$ in $d_u(x)$. 
Then $deg(R_v(x)) \leq deg(d_u(x))$.
Moreover, 
$$\frac{x^P-1}{(x-1)d_u(x)}=\left(\prod\limits_{r \in \{1,...,t\}}\sum\limits_{i=0}^{q-1} x^{q^{r-1}\cdot i}\right)/\left(\prod\limits_{r \in X}\sum\limits_{i=0}^{q-1} x^{q^{r-1}\cdot i}\right)=\prod\limits_{r \in \{1,...,t\}\backslash X}\sum\limits_{i=0}^{q-1} x^{q^{r-1}\cdot i},$$
from which the coefficients of $\frac{x^P-1}{(x-1)d_u(x)}$ are nonnegative, hence, for each integer $a\geq 0$, $a<P$, the coefficient of the polynomial $Q_v(x)=R_v(x)\frac{x^P-1}{(x-1)d_u(x)}$ at $x^a$ is a nonnegative integer which does not exceed the coefficient at $x^a$ of the polynomial $d_u(x)\frac{x^P-1}{(x-1)d_u(x)}=\frac{x^P-1}{x-1}$, which, in turn, equals $1$. Consequently, each coefficient of $Q_v(x)$ equals either $0$ or $1$, then for the tile $v$ it holds that $range(v) \subset \{0,\,1\}$. 
\end{proof}
Remark that the condition $m \cdot \tilde{d}_u(1)\vdots Q_u(1)$ is a generalization of $(T1)$ from \cite{coven} to multitilings.

\section{Proofs of main results}
It is a known fact (\cite{khoroshilova09}), that if a perfect $2$-colouring of the graph $C_{\infty}(l_1,...,l_k)$ exists, then it has some period $P$. 
In other words, for this $P$ there exists a perfect $2$-colouring $S$ with parameters $(b,c)$  of the graph $G_P=C_{P}(l_1,...,l_k)$.

Hence, due to the correspondence between perfect colourings of the graph $G_P$ and tilings of the group $\mathbb{Z}/P\mathbb{Z}$, described in Section 2, theorem \ref{pergrcolfirst} is a corollary of the following lemma:

\begin{lemma}\label{tilingdiv}
The following conditions are equivalent:

1) There exist nonnegative integers $l_1,...,l_k$, an integer $P>1$ and a $c$-multitiling of the group $\mathbb{Z}/P\mathbb{Z}$ with the tile $u_{l_1,...,l_k;b,c;P}$.
 
2) For each prime $q$ and positive integer $t$ such that $\frac{b+c}{gcd(b,c)} \vdots q^t$, it holds that 
$b+c \leq 2k+\frac{b+c}{q^t}$.

Moreover, if 2) is satisfied, then in 1) one can take $P=\frac{b+c}{gcd(b,\,c)}$, if $\frac{b+c}{gcd(b,\,c)}$ is odd and $P=2\frac{b+c}{gcd(b,\,c)}$, if $\frac{b+c}{gcd(b,\,c)}$ is even.
\end{lemma}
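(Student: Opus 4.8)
Throughout write $d=\gcd(b,c)$ and $N=\tfrac{b+c}{d}\ (\geq 2)$. The plan is to derive Lemma~\ref{tilingdiv} from Lemma~\ref{multdivlm} after identifying the relevant polynomials. I would first check that for any $P$, any $l_1,\dots,l_k$, and any $\zeta$ with $\zeta^P=1$ one has $Q_{u_{l_1,\dots,l_k;b,c;P}}(\zeta)=A(\zeta)=\zeta^M f(\zeta)$, where $f(x)=(b+c-2k)+\sum_i(x^{l_i}+x^{-l_i})$ — the value of the mask polynomial at such $\zeta$ depends only on the exponents modulo $P$, so wrap-around is harmless. In particular $Q_u(1)=A(1)=b+c$, and, $\Phi_n$ being irreducible, $\Phi_n\mid Q_u\iff\Phi_n\mid A$ for $n\mid P$, so $\tilde d_u=\tilde S_P$. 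By Lemma~\ref{multdivlm}(1) a $c$-multitiling with tile $u$ exists iff $c\,\tilde S_P(1)\vdots(b+c)$, and since $\gcd(N,\tfrac cd)=1$ this is equivalent to $N\mid\tilde S_P(1)$. Writing $\tilde S_P(1)=\prod_q q^{m_q}$ with $m_q=\#\{r\geq1:q^r\mid P,\ \Phi_{q^r}\mid A\}$ (as $\Phi_{q^r}(1)=q$), the lemma thus amounts to: there exist $l_1,\dots,l_k$ and $P>1$ with $m_q\geq v_q(N)$ for all $q\mid N$, if and only if condition 2) holds.

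For $1)\Rightarrow2)$, fix $q,t$ with $N\vdots q^t$; by the above there are at least $t$ exponents $r_1<\dots<r_t=:R$ with $\Phi_{q^{r_j}}\mid A$. I would fold the coefficient vector of $A$ modulo $q^R$, getting a vector $\psi$ on $\mathbb Z/q^R\mathbb Z$ whose only possibly-negative entry is $\psi(M\bmod q^R)\geq b+c-2k$, the rest $\geq0$, with $\sum_s\psi(s)=b+c$; and $g(y):=\sum_s\psi(s)y^s$ is divisible by $D_0:=\prod_j\Phi_{q^{r_j}}$ because $A$ vanishes at the relevant (and $q^R$-th) roots of unity. Write $\tfrac{y^{q^R}-1}{y-1}=D_0E_0$; the complementary factor $E_0=\prod_{r\in\{1,\dots,R\}\setminus\{r_j\}}\Phi_{q^r}$ has $0/1$ coefficients and constant term $1$. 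From $g=D_0h_0$ one gets $E_0g\equiv h_0(1)\tfrac{y^{q^R}-1}{y-1}\pmod{y^{q^R}-1}$ with $h_0(1)=g(1)/D_0(1)=(b+c)/q^t$, i.e.\ the cyclic convolution $E_0*\psi$ equals the constant $(b+c)/q^t$; evaluating it at index $M\bmod q^R$ and keeping the $s'=0$ summand gives $(b+c)/q^t\geq\psi(M\bmod q^R)\geq b+c-2k$, that is $b+c\leq 2k+\tfrac{b+c}{q^t}$.

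For $2)\Rightarrow1)$ I would take $P=N$ if $N$ is odd and $P=2N$ if $N$ is even, and construct $l_1,\dots,l_k$ so that $A$ is divisible by $\Phi_{q^r}$ for all odd $q\mid N$ with $1\le r\le v_q(N)$, and (for $N$ even) by $\Phi_{2^r}$ with $2\le r\le v_2(N)+1$, which forces $N\mid\tilde S_P(1)$. Encoding the distances by the folded symmetric profile $W:\mathbb Z/P\mathbb Z\to\mathbb Z$, $W(j)=\#\{i:l_i\equiv j\}+\#\{i:l_i\equiv-j\}$ for $j\neq0$ and $W(0)=(b+c-2k)+2\#\{i:l_i\equiv0\}$, one has $A(\zeta)=\zeta^M\sum_jW(j)\zeta^j$ for $\zeta^P=1$, so the required divisibilities become: $W$ symmetric (automatic), $W(j)\geq0$ for $j\neq0$, $W(0)\geq b+c-2k$, $\sum_jW(j)=b+c$, the class sums of $W$ modulo $q^{v_q(N)}$ all equal $\tfrac{b+c}{q^{v_q(N)}}$ for odd $q\mid N$, and (for $N$ even) $W$ constant on the even and on the odd residues modulo $2^{v_2(N)+1}$; conversely any such $W$ comes from nonnegative distances, and a count shows their number is exactly $k$, after which Lemma~\ref{multdivlm}(1) finishes. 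If $b+c-2k\le d$ one simply takes $W\equiv d$. Otherwise condition 2) with the largest prime power dividing $N$ gives $b+c-2k\le d\cdot\min_{q\mid N}\tfrac N{q^{v_q(N)}}$, precisely the inequality under which these constraints are jointly solvable over $\mathbb Q$ — e.g.\ by the CRT formula
$$W(j)=(b+c-2k)[\,j=0\,]+\frac{\prod_{q\mid N}\!\Big(\tfrac{b+c}{q^{v_q(N)}}-(b+c-2k)[\,q^{v_q(N)}\mid j\,]\Big)}{(2k)^{\ell-1}},\qquad \ell=\#\{q:q\mid N\};$$
the remaining task is to produce an \emph{integer} $W$ with these properties, which I would do by induction on $\ell$, peeling off the prime $q^\ast$ with the largest prime-power part — on whose $0$-class $W$ is essentially pinned down in the tight case — and solving a structurally identical, smaller profile problem for the other coordinates. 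Enlarging $P$ to $2N$ when $N$ is even is a $2$-adic parity issue: modulo $N$ the residue $N/2$ is its own negative and the corresponding constraint would force a parity on $W(N/2)$ that can fail, whereas modulo $2^{v_2(N)+1}$ — where one drops the requirement $\Phi_2\mid A$ — the self-paired residue is even and the constraint is automatically satisfied, since $b+c-2k$ is even whenever $N$ is even.

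The formal part (the reduction and $1)\Rightarrow2)$) is routine given Lemma~\ref{multdivlm}; the crux is the integral construction in $2)\Rightarrow1)$: turning the explicit rational profile into an integral symmetric one that stays nonnegative off the origin and has $W(0)\geq b+c-2k$ in the extreme regime $b+c-2k=d\cdot\min_q N/q^{v_q(N)}$ where there is no slack at the origin, together with the bookkeeping in the induction that also accounts for the $2$-adic subtlety dictating $P=2N$.
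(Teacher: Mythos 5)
Your reduction to Lemma \ref{multdivlm} (identifying $\tilde d_u=\tilde S_P$, $Q_u(1)=b+c$, and rewriting the condition as $N\mid\tilde S_P(1)$ with $N=\frac{b+c}{\gcd(b,c)}$) matches the paper. Your proof of $1)\Rightarrow 2)$ is correct and is a nice repackaging of the paper's argument: the paper iterates a one-step inequality $h_{s_{i-1},M}\geq q\cdot h_{s_i,M}$ down the chain $0=s_0<s_1<\dots<s_t$ of exponents with $\Phi_{q^{s_i}}\mid A$, whereas you multiply once by the complementary factor $E_0=\prod_{r\notin\{r_j\}}\Phi_{q^r}$, observe that the cyclic convolution $E_0*\psi$ is the constant $(b+c)/q^t$, and read off the inequality from the $s'=0$ term. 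Both hinge on the same facts (the folded coefficient vector is nonnegative off $M$, and products of the $\Phi_{q^r}$ are $0/1$ polynomials with constant term $1$); your version is arguably more direct.

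The gap is in $2)\Rightarrow 1)$. You set up the problem as constructing a single global symmetric profile $W:\mathbb Z/P\mathbb Z\to\mathbb Z$ whose class sums modulo $q^{v_q(N)}$ are constant for every $q\mid N$ simultaneously, exhibit a rational solution (the CRT-type formula with denominator $(2k)^{\ell-1}$), and then defer the essential step — producing an \emph{integer} $W$ that is nonnegative off $0$, has $W(0)\geq b+c-2k$ with the right parities at $0$ and at the self-paired residue, and still has all class sums exactly constant — to an unexecuted ``induction on $\ell$, peeling off the prime with the largest prime-power part.'' That is precisely the hard part in your formulation, it is not carried out, and it is not clear it goes through in the tight regime you yourself flag. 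The paper avoids this global integrality problem entirely: in Proposition \ref{l_prim} it prescribes, for each prime $q_i$ \emph{separately}, the residues $l'_{i,1},\dots,l'_{i,k}$ of the individual distances modulo $q_i^{t_i}$ (modulo $2^{t_i+1}$ when $q_i=2$) so that $A(x)\equiv \frac{b+c}{q_i^{t_i}}x^{M'}\prod_{j}\Phi_{q_i^j}(x)$ modulo $x^{q_i^{t_i}}-1$, and then glues these residue systems by the Chinese remainder theorem applied to the distances $l_1,\dots,l_k$ themselves. Since $\Phi_{q_i^r}\mid A$ is detected by $A$ modulo $x^{q_i^{t_i}}-1$, which depends only on the $l_j$ modulo $q_i^{t_i}$, the primes decouple and no simultaneous integral profile ever has to be built; each per-prime multiset is written down explicitly (with the three cases $q_i>2$; $q_i=2$ with $\frac{b+c}{2^{t_i}}$ even; $q_i=2$ with $\frac{b+c}{2^{t_i}}$ odd, the last forcing $P\vdots 2^{t_i+1}$, consistent with your $2$-adic remark). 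To repair your write-up, either carry out the integral rounding induction in full, or switch to the per-prime construction on the distance residues.
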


Since for each nonnegative integer $g$ it holds that $Q_{\delta_g}(x)\equiv x^g\;(mod\;x^P-1)$, where $\delta_g:\mathbb{Z}/P\mathbb{Z} \to \mathbb{Z}$, $\delta_g(h)=1$, if $g\;mod \;P = h$, $\delta_g(h)=0$ otherwise, then for $u=u_{l_1,...,l_k;b,c;P}$ we have $A(x)-Q_u(x)\vdots(x^P-1)$, hence, $d_u(x)=S_P(x),\,\tilde{d}_u(x)=\tilde{S}_P(x)$.

Theorem \ref{pergrcol}, in turn, due to the correspondence between $c$-tilings and perfect colourings with parameters $(b,c)$, described in Section 2, is a corollary of the following lemma:

\begin{lemma}\label{maindivrask}
1) There exists a $c$-multitiling of the group $\mathbb{Z}/P\mathbb{Z}$ with the tile $u_{l_1,...,l_k;b,c;P}$ if and only if $\tilde{S}_P(1) \vdots \frac{b+c}{gcd(b,c)}$.

2) If $P=q^t$ for some prime $q$ and positive integer $t$, then there exists a $c$-tiling of the group $\mathbb{Z}/P\mathbb{Z}$ with the tile $u_{l_1,...,l_k;b,c;P}$.
\end{lemma}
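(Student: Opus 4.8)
The plan is to deduce this lemma directly from Lemma \ref{multdivlm} by identifying the relevant invariants of the tile $u=u_{l_1,\dots,l_k;b,c;P}$. As already observed in the text, since $Q_{\delta_g}(x)\equiv x^g\pmod{x^P-1}$ we have $A(x)-Q_u(x)\vdots(x^P-1)$, and consequently $d_u(x)=S_P(x)$ and $\tilde d_u(x)=\tilde S_P(x)$. So the first step is to compute $Q_u(1)$: evaluating $A(x)$ at $x=1$ gives $Q_u(1)=b+c-2k+\sum_{i=1}^k(1+1)=b+c$. Hence Lemma \ref{multdivlm}(1) says that a $c$-multitiling with tile $u$ exists if and only if $c\cdot\tilde S_P(1)\vdots(b+c)$. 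The remaining work is to show this is equivalent to $\tilde S_P(1)\vdots\frac{b+c}{\gcd(b,c)}$, which is a purely arithmetic fact: writing $d=\gcd(b,c)$, $b=d b'$, $c=d c'$ with $\gcd(b',c')=1$, one has $b+c=d(b'+c')$, and $d c'\cdot\tilde S_P(1)\vdots d(b'+c')$ iff $c'\cdot\tilde S_P(1)\vdots(b'+c')$ iff $\tilde S_P(1)\vdots(b'+c')$ (using $\gcd(c',b'+c')=\gcd(c',b')=1$), i.e.\ $\tilde S_P(1)\vdots\frac{b+c}{\gcd(b,c)}$. This proves part 1).

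For part 2), with $P=q^t$ I would invoke Lemma \ref{multdivlm}(2). The hypothesis of that lemma needs $m\cdot\tilde d_u(1)\vdots Q_u(1)$ together with $0<m\le Q_u(1)$; here $m=c$ and $Q_u(1)=b+c$, so $0<c\le b+c$ holds automatically (recall $b,c>0$), and the divisibility $c\cdot\tilde S_P(1)\vdots(b+c)$ is exactly what part 1) established to be equivalent to the assumed hypothesis $\tilde S_P(1)\vdots\frac{b+c}{\gcd(b,c)}$. Thus Lemma \ref{multdivlm}(2) yields a $c$-tiling (values in $\{0,1\}$) of $\mathbb{Z}/P\mathbb{Z}$ with tile $u$, which is precisely what is claimed.

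The only genuinely delicate point is making sure the equivalence in part 1) is applied in the correct logical direction in part 2): part 2) asserts the existence of a $c$-tiling \emph{unconditionally} (there is no "if and only if"), because when $P=q^t$ the divisibility $\tilde S_P(1)\vdots\frac{b+c}{\gcd(b,c)}$ is forced to hold. I would verify this by noting that $\tilde S_P(x)=d_u(x)$ divides $\frac{x^{q^t}-1}{x-1}=\prod_{l=1}^t\Phi_{q^l}(x)$, so $\tilde S_P(1)=q^{|X|}$ for the set $X\subseteq\{1,\dots,t\}$ of exponents with $\Phi_{q^l}(x)\mid A(x)$; meanwhile the only primes that can divide $\frac{b+c}{\gcd(b,c)}$ and force a constraint are... — at this point one must check that $\frac{b+c}{\gcd(b,c)}$ is itself a power of $q$ under the standing hypotheses, which is where the relationship between Theorem \ref{pergrcol}(2)'s hypothesis $P=q^t$ and the structure of the problem is used; alternatively, if $\frac{b+c}{\gcd(b,c)}$ has a prime factor $\ne q$ then no $c$-tiling is claimed to exist and part 2) is to be read as implicitly presuming the necessary divisibility from part 1). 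I would state part 2) carefully so that it reads: \emph{if additionally $\tilde S_P(1)\vdots\frac{b+c}{\gcd(b,c)}$}, or equivalently derive that this divisibility is automatic in the intended regime, and then the argument above closes the proof.
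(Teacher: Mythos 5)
Your proof is correct and follows essentially the same route as the paper: both parts are direct applications of Lemma \ref{multdivlm} with $m=c$ and $u=u_{l_1,\dots,l_k;b,c;P}$, using $Q_u(1)=b+c$, $\tilde d_u(1)=\tilde S_P(1)$, and the elementary equivalence $c\cdot\tilde S_P(1)\vdots(b+c)\iff\tilde S_P(1)\vdots\frac{b+c}{\gcd(b,c)}$ (which the paper asserts without the arithmetic detail you supply). Your reading of part 2) is also the intended one: the divisibility condition of part 1) is an implicit standing hypothesis there rather than an automatic consequence of $P=q^t$, and the paper's own proof likewise only verifies the remaining condition $0<c\le b+c$.
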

\begin{proof}
Substitute $m=c$, $u=u_{l_1,...,l_k;b,c;P}$ in lemma \ref{multdivlm}, the condition $m \cdot \tilde{d}_u(1)\vdots Q_u(1)$ can be rewritten as $c \cdot \tilde{S}_P(1) \vdots (b+c)$, since $Q_u(1)=b+c$. This, in turn, is equivalent to $\tilde{S}_P(1) \vdots \frac{b+c}{gcd(b,c)}$. The condition $0<m\leq Q_u(1)$ from point 2) of lemma \ref{multdivlm} is also satisfied. 
\end{proof}

\begin{proof}[Proof of lemma \ref{tilingdiv}]
"$\Rightarrow$"\; 
Since $\Phi_{p^k}(1)=p$ for each prime $p$ and integer $k>0$, then 
\begin{align}
\label{s_tilde_1}
\tilde{S}_P(1)=\prod\limits_{(p,k):\;p\text{ простое},\;k>0,\;p^k \mid P,\;\Phi_{p^k}(x) \mid A(x)}p.
\end{align}
From the conditions of lemma \ref{tilingdiv} combined with lemma \ref{maindivrask} it follows that $\tilde{S}_P(1) \vdots q^t$.
Hence, for at least $t$ pairs $(p,k)$ from the product (\ref{s_tilde_1}) it holds that $p=q$, which implies that there exist $0<s_1<...<s_t$ such that for each $1\leq i \leq t$ it holds that $$A(x) \vdots \Phi_{q^{s_i}}(x)=
\frac{x^{q^{s_i}}-1}{x^{q^{s_i-1}}-1}\;(**).$$

Denote $h_{j,r}=\sum\limits_{r':\; q^{j} \mid (r'-r)}a_{r'}$, where $a_{r'}$ is the coefficient of the polynomial $A(x)$ at $x^{r'}$. Then it is easy to see that 
$(**)$ can be rewritten as\\ $h_{s_i,r}=h_{s_i,r+q^{s_i-1}}$ for each $i,r$, since $$(x^{q^{s_i-1}}-1)A(x) \equiv
\sum\limits_{r=0}^{q^{s_i}-1} h_{s_i,r-q^{s_i-1}}\cdot x^r-\sum\limits_{r=0}^{q^{s_i}-1} h_{s_i,r}\cdot x^r\;(mod\;x^{q^{s_i}}-1).$$

For convenience we will consider that $s_0=0$.
\begin{statement}\label{ineqq}
For $1 \leq i \leq t$ it holds that $h_{s_{i-1},M}\geq q\cdot h_{s_i,M}$.
\end{statement}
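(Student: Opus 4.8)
To prove Claim \ref{ineqq}, the plan is to compare $h_{s_{i-1},M}$ with the numbers $h_{s_i,\cdot}$ by refining residue classes from modulus $q^{s_{i-1}}$ to modulus $q^{s_i}$, and then to extract the factor $q$ from the periodicity that expresses $(**)$.

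First I would isolate the only piece of sign information that is needed. Writing $A(x)=(b+c-2k)x^M+\sum_{i=1}^{k}\bigl(x^{M+l_i}+x^{M-l_i}\bigr)$, we see that for $r'\neq M$ the coefficient $a_{r'}$ of $A(x)$ at $x^{r'}$ equals $\#\{i:l_i=r'-M\}+\#\{i:l_i=M-r'\}\geq 0$; thus $a_{r'}\geq 0$ for every $r'\neq M$, and only $a_M$ can be negative. Consequently, for any $j\geq 0$ and any residue $r$ with $r\not\equiv M\pmod{q^j}$, the sum $h_{j,r}=\sum_{r':\,q^j\mid(r'-r)}a_{r'}$ runs only over indices $r'\not\equiv M\pmod{q^j}$, in particular $r'\neq M$, so $h_{j,r}\geq 0$.

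Next I would use $s_{i-1}<s_i$ (taking $s_0=0$ when $i=1$): then $q^{s_{i-1}}\mid q^{s_i}$, and the residue class of $M$ modulo $q^{s_{i-1}}$ is the disjoint union of the $q^{\,s_i-s_{i-1}}$ residue classes $M+jq^{s_{i-1}}$ modulo $q^{s_i}$, $j=0,1,\dots,q^{\,s_i-s_{i-1}}-1$; since $h_{s_{i-1},M}=\sum_{r':\,q^{s_{i-1}}\mid(r'-M)}a_{r'}$, grouping the indices $r'$ by their class modulo $q^{s_i}$ gives
$$h_{s_{i-1},M}=\sum_{j=0}^{q^{\,s_i-s_{i-1}}-1}h_{s_i,\;M+jq^{s_{i-1}}}.$$
I would then split this sum according to whether $q^{\,s_i-1-s_{i-1}}\mid j$ (a meaningful condition since $s_{i-1}\leq s_i-1$). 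For the $q$ values of $j$ in the range with $q^{\,s_i-1-s_{i-1}}\mid j$, the shift $jq^{s_{i-1}}$ is a multiple of $q^{\,s_i-1}$, so the relation $h_{s_i,r}=h_{s_i,r+q^{\,s_i-1}}$ equivalent to $(**)$ gives $h_{s_i,M+jq^{s_{i-1}}}=h_{s_i,M}$; these $q$ terms contribute exactly $q\,h_{s_i,M}$. For every other $j$ in the range we have $0<j<q^{\,s_i-s_{i-1}}$, hence $q^{\,s_i-s_{i-1}}\nmid j$, hence $M+jq^{s_{i-1}}\not\equiv M\pmod{q^{s_i}}$, so $h_{s_i,M+jq^{s_{i-1}}}\geq 0$ by the first step. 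Summing everything, $h_{s_{i-1},M}=q\,h_{s_i,M}+(\text{a sum of non-negative terms})\geq q\,h_{s_i,M}$, which is the assertion of the Claim.

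The computation is routine once the splitting is in place. The one point that needs care is that $h_{s_i,M}$ itself may be negative (because $b+c-2k$ may be), so the factor $q$ cannot be obtained from any crude monotonicity estimate: it must come from the \emph{exact} $q$-fold repetition forced by the periodicity $(**)$, while non-negativity is used only on the residue classes distinct from that of $M$, where the possibly-negative coefficient $a_M$ never enters. I expect this to be the only genuine obstacle; checking the index arithmetic (the count of $j$ divisible by $q^{\,s_i-1-s_{i-1}}$ and the congruences modulo $q^{s_i}$ and $q^{\,s_i-1}$) is elementary.
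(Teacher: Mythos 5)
Your proof is correct and is essentially the paper's argument: both rest on the periodicity $h_{s_i,r}=h_{s_i,r+q^{s_i-1}}$ coming from $(**)$, the refinement of the residue class of $M$ into subclasses for a larger power of $q$, and the nonnegativity of every coefficient of $A(x)$ except possibly $a_M$. The only difference is organizational — the paper passes through the intermediate quantity $h_{s_i-1,M}$ via the chain $q\,h_{s_i,M}=\sum_{b=0}^{q-1}h_{s_i,M+bq^{s_i-1}}=h_{s_i-1,M}\leq h_{s_{i-1},M}$, whereas you decompose $h_{s_{i-1},M}$ over the classes modulo $q^{s_i}$ in one step and split off the $q$ terms equal to $h_{s_i,M}$ — so no further comment is needed.
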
 
\begin{proof}
The claim follows from the next relations: 
\begin{align*}
q \cdot h_{s_i,M} = \sum\limits_{b=0}^{q-1} h_{s_i,M+b \cdot q^{s_i-1}} = h_{s_i-1,M} \leq h_{s_{i-1},M}
\end{align*}
Here the first equality follows from $(**)$.
Let us prove the second equality:
$$h_{s_i-1,M}=\sum\limits_{r': q^{s_i-1} \mid (r'-M)} a_{r'}=\sum\limits_{b=0}^{q-1}\sum\limits_{r': q^{s_i} \mid (r'-M-b q^{s_i-1})} a_{r'}=\sum\limits_{b=0}^{q-1}h_{s_i,M+b\cdot q^{s_i-1}}.$$
 The last inequality follows from the fact that the coefficients of $A(x)$, except for possibly the coefficient at $x^M$, are nonnegative.
\end{proof}
Applying claim \ref{ineqq} $t$ times and again using nonnegativity of the coefficients of $A(x)$, except for possibly the coefficient at $x^M$, we obtain $b+c=h(1)=h_{s_0,M} \geq q^t \cdot h_{s_t,M} \geq q^t\cdot(b+c-2k)$ as required.

"$\Leftarrow$"\; By lemma \ref{maindivrask} it is enough to construct $l_1,...,l_k;P$ such that $\tilde{S}_P(1) \vdots \frac{b+c}{gcd(b,c)}$. In order to do this we will prove the following proposition: 

\begin{proposition} 
\label{l_prim}
Let $\frac{b+c}{gcd(b,c)}=q_1^{t_1}...q_s^{t_s}$ - be the decomposition of $\frac{b+c}{gcd(b,c)}$ into prime multiples.

There exist $l'_{i,\,1},...,l'_{i,\,k}$ such that for each nonnegative integers $l_1,...,l_k$ and $P>1$ which satisfy the following conditions:\\
1) $l_j\equiv l'_{i,\,j}\;(mod\,q_i^{t_i})$ when $q_i>2$,\\
2) $l_j\equiv l'_{i,\,j}\;(mod\,2^{t_i+1})$ when $q_i=2$,\\
3) $M=\max(l_1,...,l_k)>q_i^{t_i+1}$,\\
4) $P\vdots q_i^{t_i}$ when $q_i>2$,\\
5) $P \vdots 2^{t_i+1}$ when $q_i=2$,\\
it holds that $\tilde{S}_P(1) \vdots q_i^{t_i}$.
\end{proposition}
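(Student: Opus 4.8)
\emph{Plan.} Fix $i\in\{1,\dots,s\}$ and abbreviate $q=q_i$, $t=t_i$, $e=(b+c)/q^{t}$; this $e$ is a positive integer since $q^{t}\mid (b+c)/\gcd(b,c)$. We may use condition 2) of Lemma~\ref{tilingdiv}, the inequality $b+c\leq 2k+e$, which is the standing assumption under which the Proposition is being established. The goal is to force $\Phi_{q^{r}}(x)\mid A(x)$ for $t$ distinct values of $r$ with $q^{r}\mid P$: by formula (\ref{s_tilde_1}) each such pair $(q,r)$ contributes a factor $q$ to $\tilde{S}_P(1)$, so $\tilde{S}_P(1)\vdots q^{t}=q_i^{t_i}$. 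First I would rewrite $\Phi_{q^{r}}(x)\mid A(x)$: as $\Phi_{q^{r}}$ is irreducible and $\Phi_{q^{r}}(0)\ne 0$, this is equivalent to $A(\zeta)=0$ for a primitive $q^{r}$-th root of unity $\zeta$, and dividing the unit $\zeta^{M}$ out of $A(\zeta)$ it becomes
\[
\sum_{j=1}^{k}\bigl(\zeta^{l_j}+\zeta^{-l_j}\bigr)=2k-(b+c).
\]
The left-hand side depends on $l_1,\dots,l_k$ only modulo $q^{r}$, and $q^{r}$ divides $q^{t}$ (resp. $2^{t+1}$) for the $r$ we shall use; hence it suffices to prescribe the residues $l'_{i,1},\dots,l'_{i,k}$ modulo $q^{t}$ (resp. modulo $2^{t+1}$), which is exactly what hypotheses 1) and 2) of the Proposition permit, while 4) and 5) make the relevant $q^{r}$ divide $P$. (Hypothesis 3) is not needed for this implication; it is there for the Chinese-Remainder assembly of a single tuple in the proof of Lemma~\ref{tilingdiv}.)

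Next I would reformulate the requirement combinatorially. A choice of residues is recorded by the symmetric multiplicity function $\mu(a)=\#\{j:l_j\equiv a\}+\#\{j:l_j\equiv -a\}$ on $H=\mathbb{Z}/q^{t}\mathbb{Z}$ (resp. $\mathbb{Z}/2^{t+1}\mathbb{Z}$): it satisfies $\mu\geq 0$, $\mu(-a)=\mu(a)$, $\sum_a\mu(a)=2k$ and $\sum_{j}(\zeta^{l_j}+\zeta^{-l_j})=\sum_a\mu(a)\zeta^{a}$; conversely every symmetric $\mu\geq 0$ of total mass $2k$ with $\mu(0)$ --- and $\mu(2^{t})$ when $q=2$ --- even arises from some admissible tuple, by splitting the mass of each antipodal pair $\{a,-a\}$ onto one representative. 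So the task is to produce such a $\mu$ with $\sum_a\mu(a)\zeta^{a}=n:=2k-(b+c)$ for $\zeta$ running over the primitive $q^{r}$-th roots of unity with $r$ in a fixed $t$-element set $J$; I take $J=\{1,\dots,t\}$ for odd $q$ and $J=\{2,\dots,t+1\}$ for $q=2$. Equivalently, $\sum_a\mu(a)x^{a}-n$ must be divisible by $\prod_{r\in J}\Phi_{q^{r}}(x)$, i.e. by $\frac{x^{q^{t}}-1}{x-1}$ when $q$ is odd and by $\frac{x^{2^{t+1}}-1}{x^{2}-1}$ when $q=2$.

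For odd $q$ this is immediate. Since $\deg\bigl(\sum_a\mu(a)x^{a}-n\bigr)\leq q^{t}-1=\deg\frac{x^{q^{t}}-1}{x-1}$, the divisibility forces $\sum_a\mu(a)x^{a}-n=e'\cdot\frac{x^{q^{t}}-1}{x-1}$ for some constant $e'$, whence $\mu\equiv e'$ off $0$ and $\mu(0)=e'+n$; comparing total masses gives $q^{t}e'+n=2k$, so $e'=e$. Hence I set $\mu(a)=e$ for all $a\ne 0$ and $\mu(0)=e+n=2k-e(q^{t}-1)$. This $\mu$ is $\geq 0$ precisely because $b+c\leq 2k+e$, and $\mu(0)$ is even since $q^{t}-1$ is; so $\mu$ is realisable ($\mu(0)/2$ loops and $e$ copies of each of the distances $1,2,\dots,\tfrac{q^{t}-1}{2}$). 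Finally $\sum_a\mu(a)\zeta^{a}=(e+n)+e\sum_{a=1}^{q^{t}-1}\zeta^{a}=(e+n)-e=n$ at every $q^{t}$-th root of unity $\ne 1$, as required.

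The hard part is $q=2$, and this is where the extra factor of $2$ in the hypotheses gets spent; note $b+c$ is then automatically even. I would split $\mu$ into its even and odd parts $\mu_{\mathrm e}(m)=\mu(2m)$, $\mu_{\mathrm o}(m)=\mu(2m+1)$ for $m\in\{0,\dots,2^{t}-1\}$. Since $\frac{x^{2^{t+1}}-1}{x^{2}-1}=g(x^{2})$ with $g(y)=\frac{y^{2^{t}}-1}{y-1}$, the divisibility decouples into $g(y)\mid\bigl(\sum_m\mu_{\mathrm e}(m)y^{m}-n\bigr)$ and $g(y)\mid\sum_m\mu_{\mathrm o}(m)y^{m}$, and the argument of the odd case now forces $\mu_{\mathrm e}(m)=c_e$ off $0$, $\mu_{\mathrm e}(0)=c_e+n$, $\mu_{\mathrm o}\equiv c_o$ with $c_e+c_o=e$. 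Here $c_e$ is a free parameter; I choose $c_e=\max(0,-n)$ and $c_o=e-c_e$. The inequality $b+c\leq 2k+e$ gives $-e\leq n$, so $c_e\leq e$ and all of $c_e,c_o,c_e+n$ are $\geq 0$, and $c_e$ is even (as $n$ is), so $\mu(0)=c_e+n$ and $\mu(2^{t})=c_e$ are even and $\mu$ is realisable. A direct evaluation --- using $\sum_{m=0}^{2^{t}-1}(\zeta^{2})^{m}=0$ whenever $\zeta^{2}$ is a $2^{t}$-th root of unity $\ne 1$, that is, whenever $\zeta$ is a primitive $2^{r}$-th root with $2\leq r\leq t+1$ --- gives $\sum_a\mu(a)\zeta^{a}=(c_e+n)-c_e+0=n$. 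The reason the odd-prime argument cannot be reused verbatim is that at $r=1$ (the root $-1$) every term $\zeta^{l}+\zeta^{-l}$ is even, so $n$ is unreachable exactly when the forced value $\mu(0)=2k-e(2^{t}-1)$ has the wrong parity, i.e. exactly when $e$ is odd; passing from $J=\{1,\dots,t\}$ to $J=\{2,\dots,t+1\}$ --- whence the requirements $P\vdots 2^{t+1}$ and residues modulo $2^{t+1}$ --- reinstates the even parameter $c_e$ that repairs this. The remaining verifications (that the $\mu$ produced is genuinely realised by $k$ distances with the prescribed residues, and the bookkeeping of which $(q,r)$ divide $P$) are routine.
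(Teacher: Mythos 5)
Your proof is correct and follows essentially the same route as the paper: you prescribe the residues of the $l_j$ modulo $q_i^{t_i}$ (resp.\ $2^{t_i+1}$) so that $A(x)$ vanishes at the primitive $q_i^{r}$-th roots of unity for $t_i$ values of $r$ with $q_i^r\mid P$, the hypothesis $b+c\leq 2k+(b+c)/q_i^{t_i}$ guaranteeing that all multiplicities (in particular the number of zero residues) are nonnegative, and for odd $q_i$ your multiset of residues is literally the paper's. The only real difference is organisational: you derive the essentially unique admissible multiplicity function $\mu$ by a degree argument and merge the paper's two $q_i=2$ sub-cases (split there according to the parity of $(b+c)/2^{t_i}$) into a single construction governed by the free even parameter $c_e$, where the paper instead exhibits two separate explicit residue lists.
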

First let us make sure that the "$\Leftarrow$"\; part of lemma \ref{tilingdiv} follows from proposition \ref{l_prim}.
It is enough to apply the Chinese remainder theorem: if $\frac{b+c}{gcd(b,\,c)}$ is odd, one can take $P=\prod\limits_{i=1}^{k} q_i^{t_i}=\frac{b+c}{gcd(b,\,c)}$; if $\frac{b+c}{gcd(b,\,c)}$ is even, one can take $P=2\prod\limits_{i=1}^{k} q_i^{t_i}=2\frac{b+c}{gcd(b,\,c)}$. Next, one can take arbitrary $l_1,...,l_k$ such that $l_j\equiv l'_{i,\,j}\;(mod\,q_i^{t_i})$ when $q_i>2$ and $l_j\equiv l'_{i,\,j}\;(mod\,2^{t_i+1})$ when $q_i=2$, then $\tilde{S}_P(1) \vdots \frac{b+c}{gcd(b,\,c)}$. Increasing some of $l_i$ by $P$ a sufficient number of times, one can satisfy the condition 3).

\begin{proof}[Proof of proposition \ref{l_prim}]
If $q_i>2$, then $b+c-2k \equiv \frac{b+c}{q_i^{t_i}}\;(mod\;2)$ and by the conditions of the lemma \ref{tilingdiv} $b+c-2k \leq \frac{b+c}{q_i^{t_i}}$, therefore, set the values of $l'_{i,\,j}$ (in arbitrary order) so that there are $\frac{b+c}{2q_i^{t_i}}-\frac{b+c-2k}{2}$ zeros among them, and for each integer $r \geq 1$, $r \leq \frac{q_i^{t_i}-1}{2}$ among $l'_{i,\,j}$ there are $\frac{b+c}{q_i^{t_i}}$ values, equal to $r$ among them. In total there are exactly $$\frac{b+c}{2q_i^{t_i}}-\frac{b+c-2k}{2}+\frac{q_i^{t_i}-1}{2}\cdot \frac{b+c}{ q_i^{t_i}}=k$$ values.
Then it will hold that (here $M'=M-\frac{q_i^{t_i}-1}{2}$)
\begin{align*}
A(x)\equiv \frac{b+c}{q_i^{t_i}}(x^M\sum\limits_{r=1}^{(q_i^{t_i}-1)/2} (x^r+x^{-r})+x^M)=\frac{b+c}{q_i^{t_i}}x^{M'}(\sum\limits_{r=0}^{q_i^{t_i}-1}x^r)\\
=
 \frac{b+c}{q_i^{t_i}}x^{M'} \prod\limits_{j=1}^{t_i}\Phi_{q_i^{j}}(x)\;(mod\;x^{q_i^{t_i}}-1),
\end{align*}
since $\prod\limits_{j=1}^{t_i}\Phi_{q_i^{j}}(x)=\prod\limits_{j=0}^{t_i}\Phi_{q_i^{j}}(x)/(x-1)=\frac{x^{q_i^{t_i}}-1}{x-1}=\sum\limits_{r=0}^{q_i^{t_i}-1}x^r$.
Hence, $A(x)\vdots \prod\limits_{j=1}^{t_i}\Phi_{q_i^{j}}(x)$ and consequently also $\tilde{S}_P(x)\vdots \prod\limits_{j=1}^{t_i}\Phi_{q_i^{j}}(x)$.
Taking into account the fact that $\Phi_{q_i^j}(1)=q_i$ when $j>0$, we obtain that when $P\vdots q_i^{t_i}$ it hols that $\tilde{S}_P(1)\vdots q_i^{t_i}$ as required.

Now consider the case when $q_i=2$, but $\frac{b+c}{2^{t_i}}$ is even. Then $b+c-2k \equiv \frac{b+c}{2^{t_i}}\;(mod\;2)$, and by the conditions of the lemma \ref{tilingdiv} $b+c-2k \leq \frac{b+c}{2^{t_i}}$, therefore, set $l'_{i,\,j}$ so that:\\
I) There are $\frac{b+c}{2\cdot 2^{t_i}}-\frac{b+c-2k}{2}$ zeros among them.\\
II) For each integer $r \geq 1$, $r \leq 2^{t_i-1}-1$ there are $\frac{b+c}{2^{t_i}}$ values equal to $r$ among $l'_{i,\,j}$.\\
III) The value $2^{t_i-1}$ appears $\frac{b+c}{2^{t_i+1}}$ times.\\
In total we get exactly $$\frac{b+c}{2\cdot 2^{t_i}}-\frac{b+c-2k}{2}+(2^{t_i-1}-1)\cdot \frac{b+c}{ 2^{t_i}}+\frac{b+c}{2^{t_i+1}}=k$$ values.
Next we can proceed absolutely analogously to the above case: it will hold that (here $M'=M-(2^{t_i-1}-1)$)
\begin{align*}
A(x)\equiv \frac{b+c}{2^{t_i}}(x^M\sum\limits_{r=1}^{2^{t_i-1}-1} (x^r+x^{-r})+x^{M+2^{t_i-1}}+x^M)=\\ \frac{b+c}{2^{t_i}}x^{M'} \sum\limits_{r=0}^{2^{t_i}-1} x^r =
 \frac{b+c}{2^{t_i}}x^{M'} \prod\limits_{j=1}^{t_i}\Phi_{2^{j}}(x)\;(mod\;x^{2^{t_i}}-1),
\end{align*}
as $\prod\limits_{j=1}^{t_i}\Phi_{2^{j}}(x)=\prod\limits_{j=0}^{t_i}\Phi_{2^{j}}(x)/(x-1)=\frac{x^{2^{t_i}}-1}{x-1}=\sum\limits_{r=0}^{2^{t_i}-1}x^r$.
Hence, $A(x)\vdots \prod\limits_{j=1}^{t_i}\Phi_{2^{j}}(x)$ and consequently also $\tilde{S}_P(x)\vdots \prod\limits_{j=1}^{t_i}\Phi_{2^{j}}(x)$.
Taking into consideration the fact that $\Phi_{2^j}(1)=2$ when $j>0$, we obtain that for $P\vdots 2^{t_i}$ it holds that $\tilde{S}_P(1)\vdots 2^{t_i}$ as required.

Finally consider the case when $q_i=2$ and $\frac{b+c}{2^{t_i}}$ is odd: from the conditions of the lemma combined with the fact that $b+c-2k$ is even, we obtain $b+c-2k\leq \frac{b+c}{2^{t_i}}-1$. Introduce the polynomial 
\begin{align*}
R(x)=(\frac{b+c}{ 2^{t_i}}-1)\sum\limits_{j=0}^{2^{t_i}-1}x^{2j}+\sum\limits_{j=0}^{2^{t_i}-1}x^{2j+1}
=\\((\frac{b+c}{ 2^{t_i}}-1)+x)\sum\limits_{j=0}^{2^{t_i}-1}x^{2j}=((\frac{b+c}{ 2^{t_i}}-1)+x)\prod\limits_{j=1}^{t_i}\Phi_{2^{j+1}}(x),
\end{align*}
as $\prod\limits_{j=1}^{t_i}\Phi_{2^{j+1}}(x)=\prod\limits_{j=0}^{t_i+1}\Phi_{2^{j}}(x)/\prod\limits_{j=0}^{1}\Phi_{2^{j}}(x)=\frac{x^{2^{t_i+1}}-1}{x^2-1}=\sum\limits_{j=0}^{2^{t_i}-1}x^{2j}$.
Since $\Phi_{2^j}(1)=2$ when $j>0$, in order for $\tilde{S}_P(1)\vdots 2^{t_i}$ to be satisfied it is enough to take $P \vdots 2^{t_i+1}$ and $l'_{i,\,1},...,l'_{i,\,k}$ such that 
$$A(x) \equiv x^M R(x)\;(mod\;x^{2^{t_i+1}}-1).$$
One can achieve this by taking as $l'_{i,j}$:\\
I) $(\frac{b+c}{2^{t_i}}-1-(b+c-2k))/2$ values equal to $0$ (it is possible since $\frac{b+c}{2^{t_i}}-1-(b+c-2k)$ is even and nonnegative). \\ 
II) All the values of the form $2j+1$, where $0\leq j \leq 2^{t_i-1}-1$, one time each.\\ 
III) All the values of the form $2j$, where $1 \leq j \leq 2^{t_i-1}-1$,  $\frac{b+c}{2^{t_i}}-1$ times each.\\
IV) The value $2^{t_i}$ $(\frac{b+c}{2^{t_i}}-1)/2$ times.\\
Indeed, in total there are
$$(\frac{b+c}{2^{t_i}}-1-(b+c-2k))/2+2^{t_i-1}+(\frac{b+c}{2^{t_i}}-1)(2^{t_i-1}-1)+(\frac{b+c}{2^{t_i}}-1)/2=k$$
values. Next, we obtain
\begin{align*}
x^M R(x) \equiv 
x^M((\frac{b+c}{ 2^{t_i}}-1)+x)(1+x^2+x^4+...+x^{2^{t_i+1}-2}) \equiv\\
x^M\bigg((\frac{b+c}{2^{t_i}}-1)+\sum\limits_{j=0}^{2^{t_i-1}-1}(x^{2j+1}+x^{-2j-1})+(\frac{b+c}{2^{t_i}}-1)\sum\limits_{j=1}^{2^{t_i-1}-1}(x^{2j}+x^{-2j})+\\
(x^{2^{t_i}}+x^{-2^{t_i}})(\frac{b+c}{2^{t_i}}-1)/2\bigg) \equiv A(x)\;(mod\; x^{2^{t_i+1}-1})
\end{align*}
\end{proof}
\end{proof}

Thus, lemma \ref{tilingdiv}, and consequently (as shown in Section 2) theorem \ref{pergrcolfirst} is proven.


\begin{proof}[Proof of theorem \ref{lasthm}]
"$\Rightarrow$": follows from theorem \ref{pergrcolfirst}.

"$\Leftarrow$":
apply lemma \ref{tilingdiv}: construct the corresponding $P;l_1,...,l_k$. One can assume that $P=q^{s'}$ for some $s'>0$. Then by lemma \ref{maindivrask} it holds that $\tilde{S}_P(1)\vdots \frac{b+c}{gcd(b,\,c)}$, and then by point 2) of theorem \ref{pergrcol} the circulant graph with distances $l_1,...,l_k$ has a $P$-periodic perfect $2$-colouring with parameters $(b,\,c)$.
\end{proof}

\end{document}